\documentclass[12pt]{amsart}

\usepackage{mathpazo}

\theoremstyle{plain}
\newtheorem{theorem}{Theorem}[section]
\newtheorem{proposition}[theorem]{Proposition}
\newtheorem{lemma}[theorem]{Lemma}

\newtheorem{claim}[theorem]{Claim}
\newtheorem{fact}[theorem]{Fact}

\theoremstyle{definition}
\newtheorem{definition}[theorem]{Definition}

\theoremstyle{remark}

\newcommand{\uv}[1]{\textquotedblleft #1\textquotedblright}

\begin{document}

\title{The isomorphism class of $ c_{0} $ is not Borel}
\author{Ond\v{r}ej Kurka}
\thanks{The author was supported by grant GA\v{C}R 17-00941S and by RVO: 67985840.}
\address{Mathematical Institute of Czech Academy of Sciences, \v{Z}itn\'{a} 25, 115 67 Prague 1, Czech Republic}
\email{kurka.ondrej@seznam.cz}
\keywords{Effros Borel structure, complete analytic set, Banach space $ c_{0} $, Bourgain-Delbaen method, Tsirelson space}
\subjclass[2010]{Primary 46B03; Secondary 54H05, 46B25, 46B20}
\begin{abstract}
We show that the class of all Banach spaces which are isomorphic to $ c_{0} $ is a complete analytic set with respect to the Effros Borel structure of separable Banach spaces. The proof employs a recent Bourgain-Delbaen construction by Argyros, Gasparis and Motakis.
\end{abstract}
\maketitle

\section{Introduction and the main result}

In a preceding work \cite{kurka}, we have introduced a new approach to complexity problems in Banach space theory based on the famous Tsirelson space. Using a method from \cite{arggasmot}, we construct a family of $ \mathcal{L}_{\infty} $-spaces that is somehow related to the family of Tsirelson-like spaces from \cite{argdel}. Compared to \cite{kurka}, this provides an analogous but in some sense more powerful method and enables us to determine the complexity of several classes of separable Banach spaces.

It can be shown quite easily that the isomorphism class of any separable Banach space is analytic with respect to the Effros Borel structure. B.~Bossard asked in \cite{bossard} whether $ \ell_{2} $ is (up to isomorphism) the only infinite-dimensional separable Banach space whose isomorphism class is Borel (see Section~\ref{sec:prelim} for the definitions of the Effros Borel structure and of the related notions used below).

Although Bossard's question has been answered negatively (see \cite[Theorem~6.2]{godefroycompl}), it is still not well understood which spaces have a Borel isomorphism class. It is known that the isomorphism class is not Borel for Pe\l czy\'nski's universal space (see \cite[Theorem 2.3]{bossard}), $ C(2^{\mathbb{N}}) $ (see e.g. \cite[(33.26)]{kechris}) or $ L_{p}([0, 1]), 1 < p < \infty, p \neq 2, $ (see \cite[p.~130]{bossard} or \cite[Corollary~4.10]{ghawadrah}), and further examples are provided in \cite{godefroycompl}. On the other hand, the isomorphism classes of $ \ell_{p}, 1 < p < \infty, $ are Borel (see \cite[Theorem~2]{godefroycompl2}, see also \cite{gosr}). It is not known if the same holds for $ \ell_{1} $.

G.~Godefroy asked in \cite{godefroyprobl} if the isomorphism class of the space $ c_{0} $ is Borel. In the preceding paper \cite{kurka}, we have found a partial solution, proving that the class of all spaces isomorphic to a subspace of $ c_{0} $ is not Borel. The main result of the present work is a full solution of Godefroy's problem.

\begin{theorem} \label{thmmain}
The class of all Banach spaces isomorphic to $ c_{0} $ is complete analytic. In particular, it is not Borel.
\end{theorem}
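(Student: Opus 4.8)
The plan is to combine the two standard ingredients for a result of this kind: the (easy, and already granted) analyticity of every isomorphism class, and a Borel reduction witnessing $\Sigma^1_1$-hardness. Since the class $\{X : X \cong c_0\}$ is analytic, it suffices to produce a Borel map $T \mapsto X_T$ from the Polish space of trees on $\mathbb{N}$ into the standard Borel space $\mathrm{SB}$ of separable Banach spaces such that
$$ T \text{ is ill-founded} \iff X_T \cong c_0 . $$
As the set $\mathrm{IF}$ of ill-founded trees is $\Sigma^1_1$-complete, such a reduction immediately gives that $\{X : X \cong c_0\}$ is $\Sigma^1_1$-hard, hence (being analytic) complete analytic and non-Borel. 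The direction of the equivalence is in fact forced: an equivalence with the well-founded trees would render the class $\Pi^1_1$-hard, which is impossible for an analytic set.

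The family $X_T$ is to be manufactured from the Bourgain-Delbaen construction of Argyros, Gasparis and Motakis \cite{arggasmot}, in the spirit of the Tsirelson-indexed families of \cite{kurka} and \cite{argdel}. Each $X_T$ will be a separable $\mathcal{L}_\infty$-space whose norm is computed through a Tsirelson-like recursion, the admissible families entering that recursion being prescribed by the tree $T$; in particular $c_0$, itself a $\mathcal{L}_\infty$-space, will be the degenerate case of the construction. The design should be such that an infinite branch of $T$ supplies an infinite \uv{admissible path} along which the Tsirelson-type functionals are neutralised and the recursion collapses to its base case, while the finite ranks available when $T$ is well-founded let the recursion act nontrivially and implant a genuine asymptotic-$\ell_1$ (Tsirelson-like) structure. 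The Borel measurability of $T \mapsto X_T$ should follow routinely from the explicit, finitary character of the construction, exactly as in \cite{kurka}.

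It then remains to verify the dichotomy. For a well-founded $T$ I would isolate, by transfinite induction on the rank of $T$, a normalised block sequence in $X_T$ satisfying a lower $\ell_1$-estimate of Tsirelson type; since every infinite-dimensional subspace of $c_0$ contains a copy of $c_0$ while no such Tsirelson-like space embeds into $c_0$, this block subspace obstructs any embedding of $X_T$ into $c_0$, so in particular $X_T \not\cong c_0$. The opposite direction is where the real work lies. Given an infinite branch of $T$, I must prove a genuine equivalence $X_T \cong c_0$ on the natural basis $(e_n)$. The lower estimate $\sup_n |a_n| \le \bigl\| \sum_n a_n e_n \bigr\|$ is automatic, as the coordinate functionals belong to the norming set; the difficulty is the reverse inequality $\bigl\| \sum_n a_n e_n \bigr\| \le C \sup_n |a_n|$, precisely because these spaces are engineered so that their Tsirelson-type functionals normally push the norm above the supremum norm. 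The crux is to show that an infinite branch renders every such functional \uv{inactive}, dominated by a single coordinate functional up to a uniform constant, so that up to the factor $C$ only the coordinate functionals contribute and $X_T$ is forced to be isomorphic to $c_0$. I expect this uniform control of the interaction between the branch and the Bourgain-Delbaen extension functionals to be the main obstacle of the proof.
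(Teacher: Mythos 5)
Your overall architecture coincides with the paper's: one reduces a complete analytic set to the isomorphism class of $c_{0}$ via a Borel-in-the-parameter Bourgain-Delbaen family whose members are isomorphic to $c_{0}$ exactly on the analytic side of the dichotomy. Your choice of ill-founded trees in place of Hurewicz's set $\mathfrak{H}$ of compact families $\mathcal{M} \subset \mathcal{P}(\mathbb{N})$ containing an infinite set (Theorem~\ref{thmhur}) is cosmetic; the paper's parameter $\mathcal{M}$ is more convenient because it feeds directly into the Tsirelson-type admissibility condition, but either hard set would serve. Your well-founded direction also matches the paper's in spirit: Claim~\ref{XMclaim4}, built on the Tsirelson-like Lemmas~\ref{lemmaTsA} and~\ref{lemmaTsB}, produces skipped block sequences whose partial sums grow without bound, which excludes copies of $c_{0}$.

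The genuine gap is in the positive direction. You propose to prove $X_{T} \cong c_{0}$ by showing that the natural basis $(e_{n})$ satisfies $\Vert \sum_{n} a_{n} e_{n} \Vert \leq C \sup_{n} |a_{n}|$. For a Bourgain-Delbaen space this is the wrong target: the natural system $(d_{\gamma})$ spans finite-dimensional blocks $i_{p}(\ell_{\infty}(\Gamma_{p}))$ of growing dimension whose coordinates interact through the extension functionals $c^{*}_{\gamma}$, and what the construction actually delivers (Claim~\ref{XMclaim3} combined with Claim~\ref{XMclaim1}) is only a two-sided estimate $c \sup_{k} \Vert P_{E_{k}} u \Vert \leq \Vert u \Vert \leq C \sup_{k} \Vert P_{E_{k}} u \Vert$ for the blocks $E_{k}$ cut out by the infinite set in $\mathcal{M}$ --- that is, an isomorphism with a $c_{0}$-sum of finite-dimensional spaces, hence an embedding into $c_{0}$, and nothing more. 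Stopping there only reproves the weaker result of \cite{kurka} that the class of subspaces of $c_{0}$ is not Borel. The essential missing step --- and the entire reason for replacing the Tsirelson-like spaces of \cite{kurka} by a $\mathcal{L}_{\infty}$-construction --- is the upgrade from \uv{embeds into $c_{0}$} to \uv{isomorphic to $c_{0}$}, which the paper obtains from Rosenthal's theorem (Theorem~\ref{thmros}; equivalently Johnson-Zippin \cite{johzip}): an infinite-dimensional $\mathcal{L}_{\infty}$-space embeddable into a space with an unconditional basis is isomorphic to $c_{0}$. You state that $X_{T}$ is a $\mathcal{L}_{\infty}$-space but never use this fact in the verification; without it the argument does not reach the stated theorem.
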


A remarkable conjecture from \cite{gokala} states that if a Banach space $ X $ has summable Szlenk index and its dual $ X^{*} $ is isomorphic to $ \ell_{1} $, then $ X $ is isomorphic to $ c_{0} $. In \cite{godefroyprobl}, G.~Godefroy pointed out that validity of the conjecture would imply that the isomorphism class of $ c_{0} $ is Borel. Therefore, it is not surprising that our proof of Theorem~\ref{thmmain} is based on a counterexample to the conjecture that has been found recently by S.~A.~Argyros, I.~Gasparis and P.~Motakis \cite{arggasmot}.

As noticed in \cite{godefroyprobl}, we obtain immediately that the class of isomorphic subspaces of $ c_{0} $ is not Borel (that is the mentioned result from \cite{kurka}), as the intersection of this class with the Borel class of all infinite-dimensional separable $ \mathcal{L}_{\infty} $-spaces is exactly the isomorphism class of $ c_{0} $ (see \cite[Corollary~1]{johzip}). This works in one direction only. Theorem~\ref{thmmain} is an improvement indeed, since it is basically a statement about the structure of the class of $ \mathcal{L}_{\infty} $-spaces.

Actually, it follows from our method of proving Theorem~\ref{thmmain} that the isomorphism class of $ c_{0} \oplus X $ is not Borel in the case that $ X $ is not big in a sense (in fact, we do not know if there is a space $ X $ such that the isomorphism class of $ c_{0} \oplus X $ is Borel).

\begin{theorem} \label{thmmain2}
Let $ X $ be a separable Banach space that satisfies any of the following conditions:
\begin{itemize}
\item $ X $ does not contain an isomorphic copy of $ c_{0} $,
\item $ X $ does not contain an infinite-dimensional reflexive subspace,
\item $ X $ is a subspace of a space with an unconditional basis.
\end{itemize}
Then the class of all Banach spaces isomorphic to $ c_{0} \oplus X $ is not Borel. Furthermore, the class of all Banach spaces isomorphic to a subspace of $ c_{0} \oplus X $ is not Borel as well.
\end{theorem}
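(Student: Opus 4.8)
The plan is to recycle the Borel reduction produced in the proof of Theorem~\ref{thmmain}. That proof yields a Borel map $ x \mapsto Y_{x} $ from a Polish space into the coding of separable Banach spaces, together with a complete analytic set $ A $, such that $ Y_{x} \cong c_{0} $ precisely when $ x \in A $, while for $ x \notin A $ the space $ Y_{x} $ is an infinite-dimensional $ \mathcal{L}_{\infty} $-space of Bourgain--Delbaen type which does not contain an isomorphic copy of $ c_{0} $ and which does contain an infinite-dimensional reflexive subspace. I would compose this with the (Borel) operation $ Z \mapsto Z \oplus X $ to obtain a Borel map $ x \mapsto Y_{x} \oplus X $, and then show that it reduces $ A $ to the isomorphism class of $ c_{0} \oplus X $ (respectively to the class of subspaces of $ c_{0} \oplus X $); since $ A $ is not Borel, this settles both statements. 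The forward implication is immediate: if $ x \in A $ then $ Y_{x} \cong c_{0} $, hence $ Y_{x} \oplus X \cong c_{0} \oplus X $, which is in particular a subspace of $ c_{0} \oplus X $.

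The whole content is therefore the converse, namely that for $ x \notin A $ the space $ Y_{x} \oplus X $ is neither isomorphic to, nor isomorphic to a subspace of, $ c_{0} \oplus X $. For this I would isolate, under each of the three hypotheses, an isomorphic invariant on which $ c_{0} \oplus X $ and $ Y_{x} \oplus X $ disagree. Two soft decomposition lemmas do most of the work. First, $ c_{0} $ embeds into a direct sum $ Z_{1} \oplus Z_{2} $ if and only if it embeds into $ Z_{1} $ or into $ Z_{2} $; this follows from the Bessaga--Pe\l czy\'nski characterization of containment of $ c_{0} $ through weakly unconditionally Cauchy series. Second, $ Z_{1} \oplus Z_{2} $ contains an infinite-dimensional reflexive subspace if and only if $ Z_{1} $ or $ Z_{2} $ does; here one splits a reflexive subspace $ R $ along the two coordinate projections and uses that a projection which is not strictly singular on $ R $ makes a further subspace of $ R $ embed into the corresponding summand, while a strictly singular projection lets one pass, by a small perturbation of a basic sequence, to a reflexive subspace of the other summand. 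Granting these, the first hypothesis is handled by the invariant \uv{contains $ c_{0} $}: since neither $ Y_{x} $ (for $ x \notin A $) nor $ X $ contains $ c_{0} $, the sum $ Y_{x} \oplus X $ does not contain $ c_{0} $, whereas $ c_{0} \oplus X $ does. The second hypothesis is handled by the invariant \uv{contains an infinite-dimensional reflexive subspace}: as $ c_{0} $ has no reflexive subspace and, by hypothesis, neither does $ X $, the space $ c_{0} \oplus X $ has none either, while $ Y_{x} \oplus X $ inherits one from $ Y_{x} $. The third hypothesis is handled by the invariant \uv{embeds into a space with an unconditional basis}: concatenating the unconditional basis of $ c_{0} $ with that of an unconditional superspace of $ X $ shows $ c_{0} \oplus X $ does embed, whereas $ Y_{x} \oplus X $ does not, since otherwise its subspace $ Y_{x} $ would embed into a space with an unconditional basis, contradicting the fact that an infinite-dimensional $ \mathcal{L}_{\infty} $-space which embeds into a space with an unconditional basis must contain $ c_{0} $.

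For the subspace statement the same three invariants are available, but one must check that they are inherited in the right direction. Both \uv{contains an infinite-dimensional reflexive subspace} and the negation of \uv{embeds into a space with an unconditional basis} are stable under passing to superspaces, so in the second and third cases an embedding $ Y_{x} \oplus X \hookrightarrow c_{0} \oplus X $ would transport the offending invariant from $ Y_{x} \oplus X $ to $ c_{0} \oplus X $, a contradiction. The first case is the genuine difficulty, and I expect it to be the main obstacle: the invariant \uv{contains $ c_{0} $} is useless here, since a space failing to contain $ c_{0} $ may perfectly well embed into one that contains it, so neither orientation of the $ c_{0} $-property rules out an embedding.

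I would resolve this last case through a Johnson--Zippin-type embedding theorem in the spirit of \cite{johzip}: if $ X $ does not contain $ c_{0} $, then every infinite-dimensional $ \mathcal{L}_{\infty} $-space that embeds into $ c_{0} \oplus X $ must contain $ c_{0} $. The idea is that an $ \mathcal{L}_{\infty} $-space carries, uniformly and in a spread-out fashion, copies of $ \ell_{\infty}^{n} $; their images cannot be absorbed by $ X $, which contains no $ c_{0} $ and hence no such spread-out family, so they must load onto the $ c_{0} $-summand, and a gliding-hump argument then extracts a copy of $ c_{0} $ inside the embedded space. Applied to $ Z = Y_{x} $, which is $ \mathcal{L}_{\infty} $ and free of $ c_{0} $ for $ x \notin A $, this rules out $ Y_{x} \hookrightarrow c_{0} \oplus X $ and a fortiori $ Y_{x} \oplus X \hookrightarrow c_{0} \oplus X $, completing the reduction in the remaining case.
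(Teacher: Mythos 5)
Your treatment of the isomorphism class, and of the subspace class under the second and third hypotheses, is essentially the paper's argument: the same three invariants are used, and your two decomposition lemmas are exactly Fact~\ref{factrams}, both derived from Lemma~\ref{lemmrams}. One technical point you gloss over: the assertion that $ Y_{x} $ contains an infinite-dimensional reflexive subspace for every $ x \notin A $ is established in the paper (Proposition~\ref{propXMreflsatu}) only under the additional hypothesis that $ Y_{x}^{*} $ is separable, which is not proved for all such $ x $; the paper repairs this by restricting the reduction to a Borel set $ \mathfrak{B} \supset \mathfrak{H} $ on which the dual is separable, obtained via the Lusin separation theorem. You would need the same patch.

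The genuine gap is in the remaining case of the subspace statement ($ X $ not containing $ c_{0} $), exactly where you anticipated difficulty. The Johnson--Zippin-type lemma you propose --- if $ X $ does not contain $ c_{0} $, then every infinite-dimensional $ \mathcal{L}_{\infty} $-space embedding into $ c_{0} \oplus X $ contains $ c_{0} $ --- is false: take for $ X $ any infinite-dimensional separable $ \mathcal{L}_{\infty} $-space without a copy of $ c_{0} $ (for instance $ \mathfrak{X}_{\mathcal{M}} $ itself with $ \mathcal{M} $ consisting of finite sets, or the original Bourgain--Delbaen space); then $ Z = X $ embeds into $ c_{0} \oplus X $, is $ \mathcal{L}_{\infty} $, and contains no $ c_{0} $. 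The heuristic that the spread-out $ \ell_{\infty}^{n} $'s \uv{cannot be absorbed by $ X $} fails precisely because $ \mathcal{L}_{\infty} $-spaces without $ c_{0} $ exist. The paper does not prove the equivalence $ \mathfrak{X}_{\mathcal{M}} \oplus X \hookrightarrow c_{0} \oplus X \Leftrightarrow \mathcal{M} \in \mathfrak{H} $ in this case at all; instead it argues by contradiction, purely descriptive-set-theoretically: if the subspace class were Borel, then $ \mathfrak{C} = \{ \mathcal{M} : \mathfrak{X}_{\mathcal{M}} \hookrightarrow c_{0} \oplus X \} $ would be Borel and contain $ \mathfrak{H} $; the analytic set $ \mathfrak{D} $ of those $ \mathcal{M} \in \mathfrak{C} $ for which $ \mathfrak{X}_{\mathcal{M}} $ and $ X $ share an infinite-dimensional subspace contains $ \mathfrak{C} \setminus \mathfrak{H} $ (by Lemma~\ref{lemmrams} and Proposition~\ref{propXMisomc0}(2)); since $ \mathfrak{C} \setminus \mathfrak{H} $ is not analytic, this inclusion is proper, yielding some $ \mathcal{M} \in \mathfrak{H} \cap \mathfrak{D} $ and hence $ c_{0} \hookrightarrow Z \hookrightarrow X $, a contradiction. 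You would need to replace your final step with an argument of this kind.
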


Let us remark that the second part of this result has been already established for $ X $ without infinite-dimensional reflexive subspaces in \cite[Remark~3.9(i)]{kurka}.

The following theorem contains further consequences of Theorem~\ref{thmmain} pointed out by G.~Godefroy in \cite{godefroyprobl}.

\begin{theorem} \label{thmmain3}
The following classes of separable Banach spaces are analytic but not Borel:
\begin{itemize}
\item spaces with an unconditional basis,
\item subspaces of spaces with an unconditional basis,
\item spaces isomorphic to a $ C(K) $ space.
\end{itemize}
\end{theorem}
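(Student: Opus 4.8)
The plan is to derive Theorem~\ref{thmmain3} from Theorem~\ref{thmmain} by exhibiting, in each of the three cases, a Borel class of Banach spaces whose intersection with the target class coincides with the isomorphism class of $c_0$, together with the analyticity of each class (which is routine, since each is easily seen to be analytic by expressing the existence of the relevant structure via projective quantifiers over the coding sequences of the Effros Borel space).

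First I would record the analyticity. For spaces with an unconditional basis, one quantifies existentially over sequences in the space and asserts (via a countable, hence Borel, condition) that they form a Schauder basis with uniformly bounded unconditionality constant; subspaces of such spaces add one further existential quantifier over superspaces, and the class of $C(K)$ spaces is analytic because being isomorphic to some $C(K)$ is witnessed by an isomorphism onto a member of the Borel-parametrized family of $C(K)$ spaces. In each case the matrix of conditions defining the witness is Borel, so the projection is analytic.

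The heart of the argument is the lower bound, and here the key observation — already flagged in the surrounding text following Theorem~\ref{thmmain} and attributed to Godefroy — is that $c_0$ itself belongs to all three classes, while the \emph{infinite-dimensional separable $\mathcal{L}_\infty$-spaces} form a Borel set (this is the fact cited via \cite{johzip} in the paragraph after Theorem~\ref{thmmain}). The strategy is therefore: for each of the three classes $\mathcal{C}$, show that
\[
\mathcal{C} \cap \{\text{infinite-dimensional separable } \mathcal{L}_\infty\text{-spaces}\} = \{X : X \cong c_0\}.
\]
The inclusion $\supseteq$ is immediate since $c_0$ has an unconditional basis, is a subspace of a space with an unconditional basis, and is a $C(K)$ space for $K = \mathbb{N}\cup\{\infty\}$. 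For $\subseteq$ one invokes structural theory: an infinite-dimensional separable $\mathcal{L}_\infty$-space that embeds in a space with unconditional basis, or that is itself a $C(K)$ space, must be isomorphic to $c_0$ — for the $C(K)$ case this is the classical fact that a separable $C(K)$ that is additionally $\mathcal{L}_\infty$ and contains no copy of $C(2^{\mathbb{N}})$-type complexity reduces to $c_0$, and for the unconditional case this follows from the theorem that an $\mathcal{L}_\infty$-space with (subspace of) unconditional basis is $c_0$-like.

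If Theorem~\ref{thmmain3} classes were Borel, then intersecting with the Borel $\mathcal{L}_\infty$-class would make $\{X : X\cong c_0\}$ Borel, contradicting Theorem~\ref{thmmain}; hence none of the three classes is Borel. The main obstacle I anticipate is isolating exactly the right structural reduction for the unconditional-basis case — pinning down the precise theorem guaranteeing that an infinite-dimensional $\mathcal{L}_\infty$-space which is a subspace of a space with unconditional basis must be $c_0$ — and verifying that it applies verbatim to \emph{spaces with an unconditional basis} as well as to their \emph{subspaces}. By contrast, the $C(K)$ case should follow cleanly from the $\mathcal{L}_\infty$ characterization of $c_0$ among separable $C(K)$ spaces, and the analyticity half is entirely routine bookkeeping with the Effros Borel structure.
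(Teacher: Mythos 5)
Your argument for the first two classes is sound and genuinely different from the paper's: you intersect with the Borel class of infinite-dimensional separable $\mathcal{L}_{\infty}$-spaces and invoke Rosenthal's theorem (Theorem~\ref{thmros}) to see that this intersection is exactly the isomorphism class of $c_{0}$, then quote Theorem~\ref{thmmain}. The paper instead runs the Borel reduction $\mathfrak{S}$ of Lemma~\ref{lemmselectXM} against each class directly, which yields the stronger conclusion that the classes are hard analytic; your route only gives non-Borelness, but that is all the theorem claims, so for these two bullets your approach is a legitimate (and arguably shorter) alternative. Your worry about whether the reduction applies ``verbatim'' to spaces \emph{with} an unconditional basis is a non-issue: such a space is in particular a subspace of a space with an unconditional basis, and $c_{0}$ has an unconditional basis, so both intersections coincide.

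The $C(K)$ case, however, has a genuine gap. Your claimed identity
$$ \{\textrm{spaces isomorphic to a } C(K)\} \cap \{\textrm{inf.-dim.\ separable } \mathcal{L}_{\infty}\textrm{-spaces}\} = \{X : X \simeq c_{0}\} $$
is false: \emph{every} separable $C(K)$ space is an $\mathcal{L}_{\infty}$-space, so the intersection on the left is the entire class of infinite-dimensional separable spaces isomorphic to some $C(K)$, and it contains, e.g., $C([0,1])$ and $C(2^{\mathbb{N}})$. The auxiliary condition you gesture at (``contains no copy of $C(2^{\mathbb{N}})$-type complexity'') is not part of the Borel class you intersect with, and there is no classical fact of the form ``separable $C(K)$ which is $\mathcal{L}_{\infty}$ must be $c_{0}$.'' The paper avoids this by not trying to isolate $c_{0}$ among $C(K)$ spaces with a Borel side condition at all: it uses the reduction $\mathcal{M} \mapsto \mathfrak{X}_{\mathcal{M}}$ and only needs the one-sided facts that every infinite-dimensional separable $C(K)$ space contains an isomorphic copy of $c_{0}$, while $\mathfrak{X}_{\mathcal{M}}$ contains no copy of $c_{0}$ when $\mathcal{M}$ consists of finite sets (Proposition~\ref{propXMisomc0}(2)); hence $\mathfrak{X}_{\mathcal{M}}$ is isomorphic to a $C(K)$ space if and only if $\mathcal{M}$ contains an infinite set, and Hurewicz's theorem finishes the proof. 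If you want to stay within your ``intersect with a Borel class'' scheme for this bullet, you would have to replace the $\mathcal{L}_{\infty}$ class by something like $\{X : \mathrm{Sz}(X) \leq \omega\}$ (Borel, and among infinite-dimensional separable $C(K)$ spaces it singles out exactly the isomorphism class of $c_{0}$), but that is a different and less elementary argument than the one you wrote.
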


Let us note that it is not known whether the class of all spaces with a Schauder basis is Borel. On the other hand, the class of all separable Banach spaces with the bounded approximation property is known to be Borel (see \cite{ghawadrah2}).

Let us mention some achievements of Banach space theory employed in this work. One of our main tools is the space $ T $ introduced by T.~Figiel and W.~B.~Johnson (see \cite{figjoh}), dual to the famous (original) Tsirelson space $ T^{*} $ (see \cite{tsirelson}). The construction of $ T $ was later generalized by S.~A.~Argyros and I.~Deliyanni (see \cite{argdel}). Their Tsirelson-like spaces have been used in \cite{kurka} for proving that the class of isomorphic subspaces of $ c_{0} $ is not Borel. These spaces play a role also in the present paper, although their application is not so direct this time.

The major part of the paper is devoted to the construction of a $ \mathcal{L}_{\infty} $-space $ \mathfrak{X}_{\mathcal{M}} $ for a compact system $ \mathcal{M} $ of sets of natural numbers. The gist of this construction is that $ \mathfrak{X}_{\mathcal{M}} $ is isomorphic to $ c_{0} $ if and only if $ \mathcal{M} $ contains an infinite set. Theorem~\ref{thmmain} follows quite easily, as the set of all compact systems $ \mathcal{M} $ containing an infinite set is not Borel by a classical result of W.~Hurewicz.

The space $ \mathfrak{X}_{\mathcal{M}} $ is a modification of the space $ \mathfrak{X}_{0} $ from \cite{arggasmot}. Both spaces are constructed by the Bourgain-Delbaen method introduced in the seminal paper \cite{bourdelb}. In fact, the constructions of $ \mathfrak{X}_{\mathcal{M}} $ and $ \mathfrak{X}_{0} $ are very similar, which enables us to keep most of the used notation. The main difference is that the growth condition $ n \geq (\#\Gamma_{\mathrm{rank}(\xi)})^{2} $ from \cite[p.~696]{arggasmot} is replaced with a condition involving $ \mathcal{M} $.

\section{Preliminaries} \label{sec:prelim}

Our terminology concerning Banach space theory and descriptive set theory follows \cite{fhhmpz} and \cite{kechris}. All Banach spaces in this paper are considered over $ \mathbb{R} $. By an isomorphism we always mean a linear isomorphism.

Given $ \lambda \geq 1 $, we say that Banach spaces $ F $ and $ G $ are \emph{$ \lambda $-isomorphic} if there is a surjective linear operator $ T : F \to G $ such that $ \Vert T \Vert \Vert T^{-1} \Vert \leq \lambda $.

A Banach space $ X $ is called a \emph{$ \mathcal{L}_{\infty, \lambda} $-space} if, for any finite-dimensional subspace $ F $ of $ X $, there exists a finite-dimensional subspace $ G $ of $ X $ containing $ F $ such that $ G $ is $ \lambda $-isomorphic to $ \ell_{\infty}^{n} $, where $ n = \mathrm{dim} \, G $. A Banach space $ X $ is said to be a \emph{$ \mathcal{L}_{\infty} $-space} if it is a $ \mathcal{L}_{\infty, \lambda} $-space for some $ \lambda \geq 1 $.

By $ c_{00} $ we denote the vector space of all systems $ x = \{ x(n) \}_{n=1}^{\infty} $ of scalars such that $ x(n) = 0 $ for all but finitely many $ n $'s. By the canonical basis of $ c_{00} $ we mean the algebraic basis consisting of vectors $ e_{n} = \mathbf{1}_{\{ n \}}, n \in \mathbb{N} $. For $ E \subset \mathbb{N} $ and $ x \in c_{00} $, we denote by $ Ex $ the element of $  c_{00} $ given by $ Ex(n) = x(n) $ for $ n \in E $ and $ Ex(n) = 0 $ for $ n \notin E $.

In the context of Banach spaces, by a basis we mean a Schauder basis. A basis $ \{ x_{i} \}_{i=1}^{\infty} $ of a Banach space $ X $ is said to be \emph{unconditional} if there is a constant $ c \geq 1 $ such that $ \Vert \sum_{i \in A} a_{i}x_{i} \Vert \leq c \Vert \sum_{i \in B} a_{i}x_{i} \Vert $ whenever $ A \subset B $ are finite sets of natural numbers and $ a_{i} \in \mathbb{R} $ for $ i \in B $. A basis $ \{ x_{i} \}_{i=1}^{\infty} $ of a Banach space $ X $ is said to be \emph{shrinking} if
$$ X^{*} = \overline{\mathrm{span}} \{ x_{1}^{*}, x_{2}^{*}, \dots \} $$
where $ x_{1}^{*}, x_{2}^{*}, \dots $ is the dual basic sequence $ x_{n}^{*} : \sum_{i=1}^{\infty} a_{i}x_{i} \mapsto a_{n} $.

We will need the following standard fact (see e.g. \cite[Lemma~B.6]{dodos}), as well as some immediate consequences.

\begin{lemma} \label{lemmrams}
Let $ X_{1} $, $ X_{2} $ be Banach spaces and let $ P_{X_{i}}, i = 1, 2, $ denote the projection $ (x_{1}, x_{2}) \in X_{1} \oplus X_{2} \mapsto x_{i} $. If $ Y $ is an infinite-dimensional subspace of $ X_{1} \oplus X_{2} $, then there are $ i \in \{ 1, 2 \} $ and an infinite-dimensional subspace $ Z $ of $ Y $ such that $ P_{X_{i}}|_{Z} $ is an isomorphism.
\end{lemma}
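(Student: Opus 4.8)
The plan is to argue by a dichotomy on the behaviour of $ P_{X_{1}} $. If the restriction $ P_{X_{1}}|_{Z} $ happens to be an isomorphism onto its image for some infinite-dimensional subspace $ Z $ of $ Y $, we are already done with $ i = 1 $. So I would assume the opposite, namely that $ P_{X_{1}} $ is bounded below on no infinite-dimensional subspace of $ Y $. The useful reformulation of this assumption is that $ \inf \{ \Vert P_{X_{1}} w \Vert : w \in W, \Vert w \Vert = 1 \} = 0 $ for \emph{every} infinite-dimensional subspace $ W $ of $ Y $; indeed, a positive infimum on some such $ W $ would mean precisely that $ P_{X_{1}}|_{W} $ is bounded below, hence an isomorphism onto its image, contrary to the assumption. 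From here the goal is to produce an infinite-dimensional subspace $ Z $ on which $ P_{X_{2}} $ is bounded below, yielding the conclusion with $ i = 2 $.

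The core of the argument is a standard successive-extraction (gliding hump) construction. I would fix a sequence of positive reals $ \delta_{n} $ with $ \sum_{n} \delta_{n} $ small (how small is dictated by the final estimate) and build a normalized basic sequence $ \{ y_{n} \} $ in $ Y $ with $ \Vert P_{X_{1}} y_{n} \Vert < \delta_{n} $ for every $ n $. At the step from $ y_{1}, \dots, y_{k} $ to $ y_{k+1} $, the vectors annihilated by a suitable finite collection of functionals still form a finite-codimensional, hence infinite-dimensional, subspace $ W $ of $ Y $; by the observation above I may choose a unit vector $ y_{k+1} \in W $ with $ \Vert P_{X_{1}} y_{k+1} \Vert < \delta_{k+1} $, while the usual Mazur perturbation keeps $ \{ y_{n} \} $ basic with basis constant bounded by, say, $ 2 $.

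Finally I would set $ Z = \overline{\mathrm{span}} \{ y_{n} \} $. For $ z = \sum_{n} a_{n} y_{n} \in Z $, the basis constant controls the coefficients, $ |a_{n}| \leq C \Vert z \Vert $, so $ \Vert P_{X_{1}} z \Vert \leq \sum_{n} |a_{n}| \Vert P_{X_{1}} y_{n} \Vert \leq C \Vert z \Vert \sum_{n} \delta_{n} $. Choosing $ \sum_{n} \delta_{n} $ small enough (relative to $ C $ and to the constants in the norm equivalence $ \Vert (x_{1}, x_{2}) \Vert \approx \Vert x_{1} \Vert + \Vert x_{2} \Vert $ for the direct sum) forces $ \Vert P_{X_{1}} z \Vert \leq \tfrac{1}{2} \Vert z \Vert $, whence $ \Vert P_{X_{2}} z \Vert $ is comparable to $ \Vert z \Vert $; thus $ P_{X_{2}}|_{Z} $ is bounded below and hence an isomorphism onto its image. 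The step demanding the most care is the extraction itself: one must interleave the smallness condition $ \Vert P_{X_{1}} y_{k+1} \Vert < \delta_{k+1} $ with the basicness requirement and keep the basis constant controlled \emph{uniformly}, so that the coefficient bound $ |a_{n}| \leq C \Vert z \Vert $ holds with a single fixed $ C $ in the closing estimate.
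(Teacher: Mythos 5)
Your proposal is correct: the dichotomy (either $P_{X_1}$ is bounded below on some infinite-dimensional subspace, or one extracts by Mazur's technique a normalized basic sequence $\{y_n\}$ with $\sum_n \Vert P_{X_1}y_n\Vert$ small and a uniformly controlled basis constant, forcing $P_{X_2}$ to be bounded below on its closed span) is the standard argument for this fact. The paper gives no proof of its own here, citing it as a known lemma from the literature, and your argument is essentially the canonical one behind that citation.
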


\begin{fact} \label{factrams}
{\rm (1)} If both $ X_{1} $, $ X_{2} $ do not contain an isomorphic copy of $ c_{0} $, then the same holds for $ X_{1} \oplus X_{2} $.

{\rm (2)} If both $ X_{1} $, $ X_{2} $ do not contain an infinite-dimensional reflexive subspace, then the same holds for $ X_{1} \oplus X_{2} $.
\end{fact}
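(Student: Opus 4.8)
The plan is to derive both statements from the dichotomy in Lemma~\ref{lemmrams} by contraposition, exploiting the heredity of the two properties in question. In each case Lemma~\ref{lemmrams} reduces a hypothetical bad subspace of $ X_{1} \oplus X_{2} $ to a bad subspace sitting isomorphically inside one of the factors $ X_{i} $.

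For part (1), I would assume that $ X_{1} \oplus X_{2} $ contains an isomorphic copy of $ c_{0} $, that is, an infinite-dimensional subspace $ Y $ isomorphic to $ c_{0} $, and aim to produce a copy of $ c_{0} $ inside $ X_{1} $ or $ X_{2} $. Lemma~\ref{lemmrams} furnishes an index $ i \in \{ 1, 2 \} $ and an infinite-dimensional subspace $ Z \subseteq Y $ on which $ P_{X_{i}} $ is an isomorphism, so that $ P_{X_{i}}(Z) $ is a subspace of $ X_{i} $ isomorphic to $ Z $. Since $ Z $ is an infinite-dimensional subspace of $ Y \cong c_{0} $, the classical theorem of Bessaga and Pe\l czy\'nski (every infinite-dimensional subspace of $ c_{0} $ contains a further subspace isomorphic to $ c_{0} $) yields a copy of $ c_{0} $ inside $ Z $, and transporting it through the isomorphism $ P_{X_{i}}|_{Z} $ produces a copy of $ c_{0} $ in $ X_{i} $, contradicting the hypothesis.

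For part (2), the same skeleton works with \emph{reflexive} in place of \emph{isomorphic to $ c_{0} $}, and here the heredity is even cleaner. Assuming $ X_{1} \oplus X_{2} $ contains an infinite-dimensional reflexive subspace $ Y $, Lemma~\ref{lemmrams} again gives an $ i $ and an infinite-dimensional $ Z \subseteq Y $ with $ P_{X_{i}}|_{Z} $ an isomorphism. A closed subspace of a reflexive space is reflexive, so $ Z $ is reflexive, and $ P_{X_{i}}(Z) \cong Z $ is an infinite-dimensional reflexive subspace of $ X_{i} $, again a contradiction.

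The only nonroutine ingredient — and the step I would flag as the crux of part (1) — is the invocation of the hereditary property of $ c_{0} $: one genuinely needs that every infinite-dimensional subspace of $ c_{0} $ again contains $ c_{0} $, since Lemma~\ref{lemmrams} only delivers some infinite-dimensional subspace $ Z $ of the copy of $ c_{0} $, not a copy of $ c_{0} $ itself. Part (2) requires no such input, the reflexivity of $ Z $ being automatic from that of $ Y $. Both arguments are symmetric in $ X_{1}, X_{2} $, so beyond the choice of $ i $ provided by the lemma no further case analysis is needed.
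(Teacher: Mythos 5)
Your argument is correct and is exactly the one the paper intends: the Fact is stated as an ``immediate consequence'' of Lemma~\ref{lemmrams}, and your use of that lemma together with the Bessaga--Pe\l czy\'nski hereditary property of $c_{0}$ for part (1) and the heredity of reflexivity for part (2) is precisely the standard derivation. No gaps.
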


We will need also the following result from \cite{rosenthal} uncovering the isomorphic structure of the space $ c_{0} $.

\begin{theorem}[Rosenthal] \label{thmros}
Let $ X $ be a $ \mathcal{L}_{\infty} $-space. If $ X $ is isomorphic to a subspace of a space with an unconditional basis, then $ X $ is isomorphic to $ c_{0} $.
\end{theorem}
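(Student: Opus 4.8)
The plan is to prove that $X$ is isomorphic to $c_0$ by exhibiting, inside the ambient space with unconditional basis, an unconditional finite-dimensional decomposition of $X$ whose summands are uniformly isomorphic to finite-dimensional $\ell_\infty$-spaces, and then to recognise the resulting sum as a $c_0$-sum. The two structural inputs I would rely on are, first, that an infinite-dimensional $\mathcal{L}_\infty$-space contains the spaces $\ell_\infty^n$ uniformly (this is immediate from the definition, since the exhausting subspaces $G$ that are $\lambda$-isomorphic to $\ell_\infty^{\dim G}$ must have dimension tending to infinity) and consequently has no finite cotype; and second, the Banach lattice dichotomy of Lindenstrauss and Tzafriri, according to which a Banach lattice has finite cotype unless it contains $c_0$. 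Combined with separability, which rules out an $\ell_\infty$-sum, these inputs should force a separable unconditional sum of uniformly $\ell_\infty^{n_j}$-blocks with no finite cotype to be isomorphic to $(\sum_j \ell_\infty^{n_j})_{c_0} \cong c_0$.

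First I would fix an embedding $X \subseteq W$, where $W$ carries a $1$-unconditional basis $\{w_i\}$ and is thus a Banach lattice, and use the $\mathcal{L}_{\infty,\lambda}$ property to choose a nested sequence $G_1 \subseteq G_2 \subseteq \cdots$ of finite-dimensional subspaces with dense union in $X$, each $\lambda$-isomorphic to $\ell_\infty^{\dim G_k}$. Since every vector of a finite-dimensional subspace of $W$ is approximated by a finitely supported one, a gliding-hump argument lets me replace the increments of this sequence by blocks $H_j$ supported on successive intervals $I_1 < I_2 < \cdots$ of the basis. Unconditionality of $\{w_i\}$ then makes $\{H_j\}$ an unconditional finite-dimensional decomposition of a space which, after a small perturbation, is isomorphic to $X$; regrouping consecutive blocks in accordance with the nesting of the $G_k$ allows me to keep the summands uniformly isomorphic to $\ell_\infty$-spaces.

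Because the blocks $H_j$ are disjointly supported, their closed linear span is (isomorphic to) a sublattice $X'$ of $W$ with $X \cong X'$, whose within-block structure is $\ell_\infty$ by construction. As $X$ is $\mathcal{L}_\infty$ it has no finite cotype, and neither does $X'$; by the Lindenstrauss--Tzafriri dichotomy $X'$ must contain $c_0$, and the governing heuristic is that the across-block combination of the blocks $\ell_\infty^{n_j}$ cannot be comparable to an $\ell_p$-sum norm for any finite $p$, for otherwise the space would inherit cotype $\max(2,p)$. Since separability excludes the $\ell_\infty$-sum, the across-block structure is pinned down as a $c_0$-sum, giving $X \cong X' \cong (\sum_j \ell_\infty^{n_j})_{c_0} \cong c_0$. (Alternatively, once the disjoint-block structure shows that $X$ embeds into $c_0$, one may finish by the Johnson--Zippin result \cite{johzip} that an infinite-dimensional $\mathcal{L}_\infty$-subspace of $c_0$ is already isomorphic to $c_0$.) The main obstacle I anticipate is the simultaneous control required in the second step: extracting an unconditional finite-dimensional decomposition that spans all of $X$ while keeping its summands uniformly isomorphic to $\ell_\infty^{n_j}$ under the blocking and perturbation, since these two demands pull against each other and must be reconciled using the nesting of the exhausting subspaces.
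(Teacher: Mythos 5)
The paper does not prove this statement at all: it is imported verbatim from Rosenthal's paper \cite{rosenthal} as an external tool, so there is no internal argument to compare yours against, and your proposal has to stand on its own. It does not. The fatal step is the last one, where you claim that a separable space with an unconditional finite-dimensional decomposition into blocks uniformly isomorphic to $\ell_\infty^{n_j}$, having no finite cotype, must be isomorphic to $\big(\sum_j \ell_\infty^{n_j}\big)_{c_0}$ because ``separability excludes the $\ell_\infty$-sum.'' This is false: the space $\big(\sum_n \ell_\infty^n\big)_{\ell_2}$ is separable, has a $1$-unconditional FDD whose blocks are exactly $\ell_\infty^n$, contains the $\ell_\infty^n$ uniformly and hence has no finite cotype, and is reflexive, so it is certainly not isomorphic to $c_0$. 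The Lindenstrauss--Tzafriri dichotomy only tells you that the ambient lattice \emph{contains} a copy of $c_0$; it says nothing about the across-block norm being a $c_0$-sum, and your ``governing heuristic'' about $\ell_p$-sums does not exclude the genuinely non-$\ell_p$ ways the blocks can be glued (your parenthetical fallback via Johnson--Zippin begs the same question, since embedding $X$ into $c_0$ is exactly what has not been established). The only place your outline uses the $\mathcal{L}_\infty$-hypothesis is to get uniform copies of $\ell_\infty^n$ and hence the failure of cotype, and the example above shows that this information is strictly too weak; the actual content of Rosenthal's theorem lies in exploiting the \emph{global} local-$\ell_\infty$ structure (the compatibility of the nested $G_k$'s with one another), which your argument never touches.

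There are also gaps earlier in the outline, though they are less decisive. Producing a blocked unconditional FDD of (a perturbation of) $X$ from the nested exhausting subspaces $G_1\subseteq G_2\subseteq\cdots$ requires the projections of $G_{k+1}$ onto $G_k$ to be uniformly bounded, i.e.\ that the $G_k$ already form an FDD of $X$; this is not automatic from the $\mathcal{L}_{\infty,\lambda}$ definition, and arranging an FDD of a separable $\mathcal{L}_\infty$-space whose partial sums are uniformly isomorphic to $\ell_\infty^n$'s is itself a nontrivial theorem (Johnson--Rosenthal--Zippin), not a routine gliding hump. Finally, the closed span of disjointly supported finite-dimensional blocks $H_j$ is not in general a sublattice of $W$ (that would require each $H_j$ to be the full span of the coordinates in $I_j$), so the cotype dichotomy for Banach lattices cannot be applied to $X'$ directly; applied to $W$ it yields nothing about $X$.
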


A \emph{Polish space (topology)} means a separable completely metrizable space (topology). A set $ P $ equipped with a $ \sigma $-algebra is called a \emph{standard Borel space} if the $ \sigma $-algebra is generated by a Polish topology on $ P $.

A subset $ A $ of a standard Borel space $ X $ is called \emph{analytic} if there are a standard Borel space $ Z $ and a Borel mapping $ g : Z \to X $ such that $ A = g(Z) $. Moreover, a subset $ A $ of a standard Borel space $ X $ is called a \emph{hard analytic set} if every analytic subset $ B $ of a standard Borel space $ Y $ admits a Borel mapping $ f : Y \to X $ such that $ f^{-1}(A) = B $. A subset of a standard Borel space is called a \emph{complete analytic set} if it is analytic and hard analytic at the same time.

Let us recall a standard simple argument for a set to be hard analytic.

\begin{lemma} \label{lemmhardset}
Let $ A \subset X $ and $ C \subset Z $ be subsets of standard Borel spaces $ X $ and $ Z $. Assume that $ C $ is hard analytic. If there is a Borel mapping $ g : Z \to X $ such that
$$ g(z) \in A \quad \Leftrightarrow \quad z \in C, $$
then $ A $ is hard analytic as well.
\end{lemma}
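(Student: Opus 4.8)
The plan is to exploit the fact that hardness is preserved under precomposition with a Borel reduction, so the proof is essentially a one-line composition once the definitions are unwound. First I would restate the hypothesis $ g(z) \in A \Leftrightarrow z \in C $ in the equivalent form $ g^{-1}(A) = C $, since this is the shape in which it will actually be used.

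Next, to verify that $ A $ is hard analytic, I would take an arbitrary analytic subset $ B $ of an arbitrary standard Borel space $ Y $ and aim to produce a Borel map $ f : Y \to X $ with $ f^{-1}(A) = B $. Because $ C $ is assumed to be hard analytic, there is already a Borel map $ h : Y \to Z $ satisfying $ h^{-1}(C) = B $. I would then simply set $ f = g \circ h $ and check the two required properties.

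For correctness of the reduction, the computation $ f^{-1}(A) = h^{-1}(g^{-1}(A)) = h^{-1}(C) = B $ gives exactly what is needed, and since $ B $ was an arbitrary analytic set this establishes that $ A $ is hard analytic. The only point carrying any content is that $ f $ is Borel; this is the standard fact that a composition of Borel maps is Borel, namely for every Borel $ S \subset X $ the set $ f^{-1}(S) = h^{-1}(g^{-1}(S)) $ is Borel because $ g^{-1}(S) $ is Borel in $ Z $ (as $ g $ is Borel) and $ h $ pulls Borel sets back to Borel sets.

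I expect the measurability of $ g \circ h $ to be the only obstacle, and even that is routine. It is worth noting that analyticity of $ A $ is irrelevant to the statement, which concerns hardness alone, so no use of the definition of analytic set is required; the whole argument is a direct manipulation of preimages.
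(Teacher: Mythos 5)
Your proof is correct and is precisely the standard composition-of-reductions argument that the paper has in mind (the paper states this lemma without proof, calling it "a standard simple argument"). Nothing is missing: the reduction $f = g\circ h$, the preimage computation, and the Borel-ness of the composition are exactly the required points.
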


By $ \mathcal{P}(\mathbb{N}) $ we denote the set of all subsets of $ \mathbb{N} $ endowed with the coarsest topology for which $ \{ A \in \mathcal{P}(\mathbb{N}) : n \in A \} $ is clopen for every $ n $. Obviously, $ \mathcal{P}(\mathbb{N}) $ is nothing else than a copy of the Cantor space $ \{ 0, 1 \}^{\mathbb{N}} $.

For a topological space $ X $, we denote by $ F(X) $ the family of all closed subsets and by $ K(X) $ the family of all compact subsets of $ X $.

The \emph{hyperspace of compact subsets of $ X $} is defined as $ K(X) $ equipped with the \emph{Vietoris topology}, i.e., the topology generated by the sets of the form 
$$ \{ K \in K(X) : K \subset U \}, $$
$$ \{ K \in K(X) : K \cap U \neq \emptyset \}, $$
where $ U $ varies over open subsets of $ X $. If $ X $ is Polish, then so is $ K(X) $.

The set $ F(X) $ can be equipped with the \emph{Effros Borel structure}, defined as the $ \sigma $-algebra generated by the sets
$$ \{ F \in F(X) : F \cap U \neq \emptyset \}, $$
where $ U $ varies over open subsets of $ X $. If $ X $ is Polish, then, equipped with this $ \sigma $-algebra, $ F(X) $ forms a standard Borel space.

It is well-known that the space $ C([0, 1]) $ contains an isometric copy of every separable Banach space. By the \emph{standard Borel space of separable Banach spaces} we mean
$$ \mathcal{SE}(C([0, 1])) = \big\{ F \in F(C([0, 1])) : \textrm{$ F $ is linear} \big\}, $$
considered as a subspace of $ F(C([0, 1])) $.

By \cite[Proposition~2.2]{bossard}, $ \mathcal{SE}(C([0, 1])) $ is a standard Borel space. Whenever we say that a class of separable Banach spaces has a property like being Borel, analytic, complete analytic, etc., we consider that class as a subset of $ \mathcal{SE}(C([0, 1])) $.

The following lemma is needed for proving that the parametrized construction in Section~\ref{sec:bourdelb} can be realizable as a Borel mapping.

\begin{lemma}[{\cite[Lemma 2.4]{kurka}}] \label{lemmselect}
Let $ \Xi $ be a standard Borel space and let $ \{ (X_{\xi}, \Vert \cdot \Vert_{\xi}) \}_{\xi \in \Xi} $ be a system of Banach spaces each member of which contains a sequence $ x^{\xi}_{1}, x^{\xi}_{2}, \dots $ whose linear span is dense in $ X_{\xi} $. Assume that the function
$$ \xi \in \Xi \; \mapsto \; \Big\Vert \sum_{k=1}^{n} \lambda_{k} x^{\xi}_{k} \Big\Vert_{\xi} \in \mathbb{R} $$
is Borel whenever $ n \in \mathbb{N} $ and $ \lambda_{1}, \dots, \lambda_{n} \in \mathbb{R} $. Then there exists a Borel mapping $ \mathfrak{S} : \Xi \to \mathcal{SE}(C([0, 1])) $ such that $ \mathfrak{S}(\xi) $ is isometric to $ X_{\xi} $ for every $ \xi \in \Xi $.
\end{lemma}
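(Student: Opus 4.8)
The plan is to reduce the statement to producing a sequence of Borel maps $ f_{n} : \Xi \to C([0,1]) $ with the property that, for every $ \xi $, the closed linear span of $ \{ f_{n}(\xi) : n \in \mathbb{N} \} $ is isometric to $ X_{\xi} $. Granting this, I would set $ \mathfrak{S}(\xi) = \overline{\mathrm{span}} \{ f_{n}(\xi) : n \in \mathbb{N} \} $ and invoke the description of the Effros Borel structure: since the sets $ \{ F : F \cap U \neq \emptyset \} $, $ U $ open, generate the $ \sigma $-algebra on $ F(C([0,1])) $, and since for each such $ U $ one has $ \{ \xi : \mathfrak{S}(\xi) \cap U \neq \emptyset \} = \bigcup_{n} \{ \xi : f_{n}(\xi) \in U \} $, a countable union of Borel sets, the assignment $ \xi \mapsto \mathfrak{S}(\xi) $ is Borel into $ F(C([0,1])) $. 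Its values are closed linear subspaces, hence it lands in the Borel subset $ \mathcal{SE}(C([0,1])) $ and is Borel as a map into that standard Borel space. Thus all the difficulty is concentrated in the measurable construction of the $ f_{n} $.

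I would obtain the $ f_{n} $ from an isometric embedding $ J_{\xi} : X_{\xi} \to C([0,1]) $ built through the dual unit balls. For a finitely supported rational vector $ s = (s_{k}) $ the hypothesis makes $ \xi \mapsto \Vert \sum_{k} s_{k} x^{\xi}_{k} \Vert_{\xi} $ Borel. Coding a functional $ \psi \in B_{X_{\xi}^{*}} $ by its rescaled coordinates $ \big( \psi(x^{\xi}_{k}) / (1 + \Vert x^{\xi}_{k} \Vert_{\xi}) \big)_{k} $ identifies $ (B_{X_{\xi}^{*}}, w^{*}) $ homeomorphically with a nonempty compact subset $ \tilde{B}_{\xi} $ of the Hilbert cube $ Q = [-1,1]^{\mathbb{N}} $, cut out by the system of inequalities $ | \sum_{k} s_{k} (1 + \Vert x^{\xi}_{k} \Vert_{\xi}) q_{k} | \leq \Vert \sum_{k} s_{k} x^{\xi}_{k} \Vert_{\xi} $ ranging over finitely supported rational $ s $; since each inequality is Borel in $ \xi $, the map $ \xi \mapsto \tilde{B}_{\xi} \in K(Q) $ is Borel. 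Selecting, by a measurable-selection argument, a continuous surjection $ h_{\xi} : \Delta \to \tilde{B}_{\xi} $ from the Cantor space $ \Delta $ that depends measurably on $ \xi $, I would define for $ x = \sum_{k} s_{k} x^{\xi}_{k} $ the function $ J_{\xi}(x)(\sigma) = \sum_{k} s_{k} (1 + \Vert x^{\xi}_{k} \Vert_{\xi}) (h_{\xi}(\sigma))_{k} $, which is continuous on $ \Delta $ and equals $ \psi(x) $ at the point $ \sigma $ with $ h_{\xi}(\sigma) $ coding $ \psi $. Because the range of $ h_{\xi} $ is the whole norming ball $ B_{X_{\xi}^{*}} $, we get $ \sup_{\sigma} | J_{\xi}(x)(\sigma) | = \Vert x \Vert_{\xi} $, so $ J_{\xi} $ extends to a linear isometry of $ X_{\xi} $ into $ C(\Delta) $; composing with a fixed isometric embedding $ C(\Delta) \hookrightarrow C([0,1]) $ (extend functions off the Cantor set by linear interpolation) yields $ J_{\xi} $ as required.

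Finally I would enumerate the finitely supported rational coefficient vectors as $ s^{(1)}, s^{(2)}, \dots $ and put $ f_{n}(\xi) = J_{\xi}\big( \sum_{k} s^{(n)}_{k} x^{\xi}_{k} \big) $. Each $ f_{n} $ is given by the explicit finite formula above, so its Borel measurability reduces to that of the coordinate maps $ \xi \mapsto \big( \sigma \mapsto (h_{\xi}(\sigma))_{k} \big) $ into $ C(\Delta) $, while the closed span of $ \{ f_{n}(\xi) \} $ equals $ J_{\xi}(\overline{\mathrm{span}}\{ x^{\xi}_{k} \}) = J_{\xi}(X_{\xi}) $, which is isometric to $ X_{\xi} $. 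The \emph{main obstacle} is precisely the Borel dependence of the selection $ h_{\xi} $: one must upgrade the pointwise facts (Hahn--Banach norming, and the existence of a continuous surjection of the Cantor set onto a compact metric space) into a genuinely Borel assignment $ \xi \mapsto h_{\xi} $, and then verify the joint measurability needed to make each $ f_{n} $ a Borel map into the Polish space $ C([0,1]) $. All numerical data are controlled by the Borel functions $ \xi \mapsto \Vert \sum_{k} \lambda_{k} x^{\xi}_{k} \Vert_{\xi} $, so the heart of the argument is the measurability bookkeeping, carried out by applying a Kuratowski--Ryll-Nardzewski type selection to the Borel multifunction $ \xi \mapsto \tilde{B}_{\xi} $.
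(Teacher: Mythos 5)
This lemma is imported by the paper as a black box (it is \cite[Lemma 2.4]{kurka}, stated without proof here), so there is no in-paper proof to compare against; I can only assess your argument on its own terms. Your architecture is the standard one for results of this type and is sound in outline: reduce Borelness of $\mathfrak{S}$ to Borelness of a sequence of selectors $f_{n}$ that is dense in $\mathfrak{S}(\xi)$ (note that density, not merely spanning, is what makes $\{\xi:\mathfrak{S}(\xi)\cap U\neq\emptyset\}=\bigcup_{n}\{\xi:f_{n}(\xi)\in U\}$ correct --- your enumeration of all finite rational combinations does supply this); realize $(B_{X_{\xi}^{*}},w^{*})$ as a compact subset $\tilde{B}_{\xi}$ of the Hilbert cube cut out by countably many Borel-in-$\xi$ inequalities; embed $X_{\xi}$ isometrically into $C(\Delta)$ by evaluating against a parametrized continuous surjection $h_{\xi}:\Delta\to\tilde{B}_{\xi}$. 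The claim that $\xi\mapsto\tilde{B}_{\xi}$ is Borel into $K(Q)$ is true but needs more than \uv{each inequality is Borel}: one must invoke the fact that a Borel set with compact sections induces a Borel map into the hyperspace (or argue directly, using compactness of $Q$ to reduce $\{\xi:\tilde{B}_{\xi}\subset U\}$ to finite subfamilies of the defining closed conditions).

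The genuine gap is the step you yourself flag as the main obstacle: the Borel dependence of $h_{\xi}$ on $\xi$. The Kuratowski--Ryll-Nardzewski theorem, applied to the multifunction $\xi\mapsto\tilde{B}_{\xi}$, yields only a sequence of Borel point-selectors $d_{n}(\xi)\in\tilde{B}_{\xi}$ whose values are dense in $\tilde{B}_{\xi}$; it does not produce a continuous surjection of the Cantor set onto $\tilde{B}_{\xi}$, measurably in $\xi$. To get $h_{\xi}$ you must additionally build, from the dense sequence $(d_{n}(\xi))_{n}$, a Cantor scheme $(F^{\xi}_{t})_{t\in 2^{<\mathbb{N}}}$ of nonempty closed subsets of $\tilde{B}_{\xi}$ with vanishing diameters and $F^{\xi}_{t}=F^{\xi}_{t0}\cup F^{\xi}_{t1}$, chosen so that all the data (which balls around which $d_{n}(\xi)$ are used at each node) depend Borel-measurably on $\xi$, and then set $h_{\xi}(\sigma)$ to be the unique point of $\bigcap_{m}F^{\xi}_{\sigma|m}$; only after that can one check that $\xi\mapsto h_{\xi}\in C(\Delta,Q)$ is Borel and deduce Borelness of each $f_{n}$. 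This parametrized version of the classical \uv{every compact metric space is a continuous image of $\Delta$} is exactly where all the work of the lemma lives, and your proposal names it without carrying it out, so as it stands the proof is incomplete at its central point. Everything downstream of that step (the isometry $\sup_{\sigma}|J_{\xi}(x)(\sigma)|=\Vert x\Vert_{\xi}$ via Hahn--Banach, the extension $C(\Delta)\hookrightarrow C([0,1])$, and the final Effros--Borel verification) is correct.
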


\section{Tsirelson-like spaces}

In this section, we recall the definition of Tsirelson-like spaces introduced by S. A. Argyros and I. Deliyanni \cite{argdel}. The structure of these spaces enables us to prove some properties of a compact system of finite sets of natural numbers.

For $ \mathcal{M} \in K(\mathcal{P}(\mathbb{N})) $, a family $ \{ E_{1}, \dots, E_{n} \} $ of successive finite subsets of $ \mathbb{N} $ is said to be \emph{$ \mathcal{M} $-admissible} if an element of $ \mathcal{M} $ contains numbers $ m_{1}, \dots, m_{n} $ such that
$$ m_{1} \leq E_{1} < m_{2} \leq E_{2} < \dots < m_{n} \leq E_{n}. $$

\begin{definition}[Argyros, Deliyanni]
For $ \mathcal{M} \in K(\mathcal{P}(\mathbb{N})) $, the space $ T[\mathcal{M}, \frac{1}{2}] $ is defined as the completion of $ c_{00} $ under the implicitly defined norm
$$ \Vert x \Vert_{\mathcal{M}, \frac{1}{2}} = \max \bigg\{ \Vert x \Vert_{\infty}, \frac{1}{2} \sup \sum_{k=1}^{n} \Vert E_{k}x \Vert_{\mathcal{M}, \frac{1}{2}} \bigg\}, $$
where the \uv{sup} is taken over all $ \mathcal{M} $-admissible families $ \{ E_{1}, \dots, E_{n} \} $.
\end{definition}

\begin{lemma} \label{lemmaTsA}
Let $ \mathcal{M} \in K(\mathcal{P}(\mathbb{N})) $ consist of finite sets only. Let $ \omega > 0 $. Then there are $ k \in \mathbb{N} $ and $ \alpha_{1}, \dots, \alpha_{k} \geq 0 $ with $ \sum_{i=1}^{k} \alpha_{i} = 1 $ such that
$$ \forall A \in \mathcal{M} : \sum_{1 \leq i \leq k, i \in A} \alpha_{i} < \omega. $$
\end{lemma}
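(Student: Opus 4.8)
The plan is to recast the statement as a minimax equality and then lean on the compactness of $ \mathcal{M} $. For a probability vector $ \alpha $ supported on $ \{ 1, \dots, k \} $ write $ \alpha(A) = \sum_{i \in A} \alpha_{i} $. Since $ \alpha $ only sees the trace $ A \cap \{ 1, \dots, k \} $, the quantity $ \max_{A \in \mathcal{M}} \alpha(A) $ is a maximum over the finitely many traces, hence attained and continuous (piecewise linear) in $ \alpha $ on the compact simplex. Thus it suffices to prove that $ v_{k} := \min_{\alpha} \max_{A \in \mathcal{M}} \alpha(A) < \omega $ for some $ k $, because the minimizing $ \alpha $ then satisfies $ \alpha(A) \leq v_{k} < \omega $ for every $ A \in \mathcal{M} $. (Alternatively, one can first record the estimate $ \max_{A} \alpha(A) \leq 2 \Vert \sum_{i} \alpha_{i} e_{i} \Vert_{\mathcal{M}, \frac{1}{2}} $, obtained by testing the implicit norm against the $ \mathcal{M} $-admissible family of singletons $ \{ \{ i \} : i \in A \} $, and instead exhibit convex combinations of the basis of small Tsirelson norm; I take the more self-contained route.)

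Suppose toward a contradiction that $ v_{k} \geq \omega $ for every $ k $. For a fixed $ k $ this is a finite two-person zero-sum game, so the von Neumann minimax theorem produces a probability measure on the finite set of traces, which I lift to a finitely supported measure $ \mu_{k} $ on $ \mathcal{M} $ (choosing for each trace a set realizing it) with the property that $ \mu_{k}(\{ A \in \mathcal{M} : i \in A \}) \geq \omega $ for all $ i \leq k $. Here I use that the minimum over $ \alpha $ of the expected payoff equals the smallest coverage $ \min_{i \leq k} \mu_{k}(\{ A : i \in A \}) $.

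Since $ \mathcal{M} $ is compact metrizable, the space $ \mathcal{P}(\mathcal{M}) $ of probability measures is weak$ ^{*} $-compact and metrizable, so a subsequence $ \mu_{k_{j}} $ converges weak$ ^{*} $ to some $ \mu \in \mathcal{P}(\mathcal{M}) $. For each fixed $ i $ the indicator $ A \mapsto \mathbf{1}_{[i \in A]} $ is continuous on $ \mathcal{M} $ (the condition $ i \in A $ is clopen), so $ \mu_{k_{j}}(\{ A : i \in A \}) \to \mu(\{ A : i \in A \}) $; as the left-hand terms are $ \geq \omega $ once $ k_{j} \geq i $, I obtain $ \mu(\{ A : i \in A \}) \geq \omega $ for every $ i \in \mathbb{N} $.

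This is impossible precisely because every $ A \in \mathcal{M} $ is finite: for a fixed such $ A $ one has $ \mathbf{1}_{[i \in A]} = 0 $ for all $ i > \max A $, so $ \mathbf{1}_{[i \in A]} \to 0 $ pointwise as $ i \to \infty $, and dominated convergence gives $ \mu(\{ A : i \in A \}) \to 0 $, contradicting the uniform bound $ \omega > 0 $. Hence $ v_{k} < \omega $ for some $ k $, which is the assertion. I expect the only delicate point to be the passage from the family $ (\mu_{k}) $ to a single measure $ \mu $: the decay $ \mu(\{ A : i \in A \}) \to 0 $ is immediate for any \emph{fixed} measure, and the real obstacle is to make this estimate uniform against the competing measures $ \mu_{k} $, which is exactly what the weak$ ^{*} $-compactness of $ \mathcal{P}(\mathcal{M}) $ supplies.
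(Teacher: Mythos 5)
Your argument is correct, and it proves the lemma by a genuinely different route from the paper. The paper works inside the Tsirelson-like space $T[\mathcal{M},\frac{1}{2}]$: since $\mathcal{M}$ consists of finite sets, the canonical basis of $T[\mathcal{M},\frac{1}{2}]$ is shrinking (adapting Argyros--Deliyanni), hence not equivalent to the $\ell_{1}$ basis, which yields a convex combination $\sum_{i=1}^{k}\alpha_{i}e_{i}$ of norm less than $\frac{1}{2}\omega$; testing the implicit norm against the admissible family of singletons $\{\{m\}: m\in A,\ m\leq k\}$ then gives $\frac{1}{2}\sum_{i\leq k,\, i\in A}\alpha_{i}\leq \Vert\sum_{i}\alpha_{i}e_{i}\Vert_{\mathcal{M},\frac{1}{2}}<\frac{1}{2}\omega$ --- this is exactly the alternative you sketch in your parenthetical remark. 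Your main route instead phrases the statement as $v_{k}=\min_{\alpha}\max_{A\in\mathcal{M}}\alpha(A)<\omega$ for some $k$, applies von Neumann's minimax theorem to the finite game on traces to extract, under the contrary assumption, measures $\mu_{k}$ with $\mu_{k}(\{A: i\in A\})\geq\omega$ for all $i\leq k$, passes to a weak$^{*}$ limit $\mu$ using compactness and metrizability of $\mathcal{P}(\mathcal{M})$ (the sets $\{A: i\in A\}$ being clopen, so their indicators are continuous and the coverage passes to the limit), and derives a contradiction from dominated convergence since $\mathbf{1}_{[i\in A]}\to 0$ pointwise on a family of finite sets. All steps check out, including the identity $\min_{\alpha}\mathbb{E}_{\mu}[\alpha(A)]=\min_{i\leq k}\mu(\{A: i\in A\})$ and the lifting from traces to actual members of $\mathcal{M}$. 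What the paper's proof buys is brevity given machinery it needs anyway (the Tsirelson spaces are central to the whole paper), at the cost of outsourcing the real content to the nontrivial fact that the basis is shrinking; what yours buys is a self-contained, purely combinatorial and measure-theoretic argument that makes transparent exactly where the compactness of $\mathcal{M}$ and the finiteness of its members enter.
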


\begin{proof}
First of all, since $ \mathcal{M} $ consists of finite sets, the canonical basis $ e_{n} = \mathbf{1}_{\{ n \}} $ of $ c_{00} $ is a shrinking basis of $ T[\mathcal{M}, \frac{1}{2}] $ (to show this, it is possible to adapt the part (a) of the proof of \cite[Proposition~1.1]{argdel} if we consider $ \theta_{k} = \frac{1}{2} $ and $ \mathcal{M}_{k} = \mathcal{M} $ for every $ k $).

In particular, $ e_{1}, e_{2}, \dots $ is not equivalent to the standard basis of $ \ell_{1} $. We can find $ k \in \mathbb{N} $ and real numbers $ \beta_{1}, \dots, \beta_{k} $ such that
$$ \Big\Vert \sum_{i=1}^{k} \beta_{i} e_{i} \Big\Vert_{\mathcal{M}, \frac{1}{2}} < \frac{1}{2} \omega \cdot \sum_{i=1}^{k} |\beta_{i}|. $$
Considering $ \alpha_{j} = |\beta_{j}|/\sum_{i=1}^{k} |\beta_{i}| $, we obtain $ \alpha_{i} \geq 0 $, $ \sum_{i=1}^{k} \alpha_{i} = 1 $ and
$$ \Big\Vert \sum_{i=1}^{k} \alpha_{i} e_{i} \Big\Vert_{\mathcal{M}, \frac{1}{2}} = \frac{1}{\sum_{i=1}^{k} |\beta_{i}|} \Big\Vert \sum_{i=1}^{k} |\beta_{i}| e_{i} \Big\Vert_{\mathcal{M}, \frac{1}{2}} = \frac{1}{\sum_{i=1}^{k} |\beta_{i}|} \Big\Vert \sum_{i=1}^{k} \beta_{i} e_{i} \Big\Vert_{\mathcal{M}, \frac{1}{2}} < \frac{1}{2} \omega. $$
Given $ A \in \mathcal{M} $, let $ m_{1} < m_{2} < \dots < m_{n} $ be all elements of $ A $ with $ m_{j} \leq k $ and let $ E_{j} = \{ m_{j} \} $. Then the family $ \{ E_{1}, \dots, E_{n} \} $ is $ \mathcal{M} $-admissible and we can write
$$ \frac{1}{2} \omega > \Big\Vert \sum_{i=1}^{k} \alpha_{i} e_{i} \Big\Vert_{\mathcal{M}, \frac{1}{2}} \geq \frac{1}{2} \sum_{j=1}^{n} \Big\Vert E_{j}\sum_{i=1}^{k} \alpha_{i} e_{i} \Big\Vert_{\mathcal{M}, \frac{1}{2}} = \frac{1}{2} \sum_{j=1}^{n} \alpha_{m_{j}} = \frac{1}{2} \sum_{1 \leq i \leq k, i \in A} \alpha_{i}. $$
Therefore, the numbers $ \alpha_{1}, \dots, \alpha_{k} $ work.
\end{proof}

\begin{lemma} \label{lemmaTsB}
Let $ \mathcal{M} \in K(\mathcal{P}(\mathbb{N})) $ consist of finite sets only. Let $ \omega > 0 $. Let $ E_{1}, E_{2}, \dots $ be a sequence of intervals of $ \mathbb{N} \cup \{ 0 \} $ such that
$$ E_{1} < E_{2} < \dots . $$
Then there are $ k \in \mathbb{N} $ and $ \alpha_{1}, \dots, \alpha_{k} \geq 0 $ with $ \sum_{i=1}^{k} \alpha_{i} = 1 $ such that
$$ \forall A \in \mathcal{M} : \sum_{1 \leq i \leq k, E_{i} \cap A \neq \emptyset} \alpha_{i} < \omega. $$
\end{lemma}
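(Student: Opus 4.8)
The plan is to reduce Lemma~\ref{lemmaTsB} to Lemma~\ref{lemmaTsA} by pushing the system $ \mathcal{M} $ forward along the map that records which intervals a set meets. Concretely, I would define $ \phi : \mathcal{P}(\mathbb{N}) \to \mathcal{P}(\mathbb{N}) $ by
$$ \phi(A) = \{ i \in \mathbb{N} : E_{i} \cap A \neq \emptyset \} $$
and put $ \mathcal{N} = \phi(\mathcal{M}) $. The whole point of this choice is that the inner sum in the desired conclusion, $ \sum_{E_{i} \cap A \neq \emptyset} \alpha_{i} $, is literally the sum $ \sum_{i \in \phi(A)} \alpha_{i} $ that appears in Lemma~\ref{lemmaTsA} when it is applied to $ \mathcal{N} $. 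So once I know that $ \mathcal{N} $ is again a compact system of finite sets, Lemma~\ref{lemmaTsA} supplies $ k $ and $ \alpha_{1}, \dots, \alpha_{k} \geq 0 $ with $ \sum_{i=1}^{k} \alpha_{i} = 1 $ and $ \sum_{1 \leq i \leq k,\, i \in B} \alpha_{i} < \omega $ for every $ B \in \mathcal{N} $; evaluating this at $ B = \phi(A) $ for an arbitrary $ A \in \mathcal{M} $ gives exactly the required inequality, since $ i \in \phi(A) \Leftrightarrow E_{i} \cap A \neq \emptyset $.

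Thus everything hinges on verifying that $ \mathcal{N} $ consists of finite sets and is compact. Finiteness is immediate from the successiveness of the intervals: for finite $ A $, an interval can meet $ A $ only if $ \min E_{i} \leq \max A $, and since $ \min E_{i} $ is strictly increasing along the nonempty intervals there are only finitely many such $ i $, so $ \phi(A) $ is finite.

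For compactness I would argue that $ \phi $ has compact image on $ \mathcal{M} $. The clean case is when every $ E_{i} $ is finite: then each set $ \{ A : E_{i} \cap A \neq \emptyset \} = \bigcup_{n \in E_{i}} \{ A : n \in A \} $ is clopen in $ \mathcal{P}(\mathbb{N}) $, hence $ \phi $ is continuous and $ \mathcal{N} = \phi(\mathcal{M}) $ is compact as a continuous image of a compact set. The only alternative is that some $ E_{i_{0}} $ is infinite; but an infinite interval of $ \mathbb{N} \cup \{ 0 \} $ is unbounded above, so the relation $ E_{i_{0}} < E_{j} $ forces $ E_{j} = \emptyset $ for every $ j > i_{0} $. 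Then at most finitely many intervals are nonempty, $ \phi $ takes values in the finite set $ \mathcal{P}(\{ 1, \dots, i_{0} \}) $, and $ \mathcal{N} $ is finite, so compact again. I expect this small dichotomy around infinite intervals (where $ \phi $ may fail to be continuous) to be the only genuinely fiddly point; everything else is a direct transport of Lemma~\ref{lemmaTsA} through $ \phi $.
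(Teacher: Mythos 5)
Your proposal is correct and follows essentially the same route as the paper: both push $\mathcal{M}$ forward along the map $A \mapsto \{ i : E_{i} \cap A \neq \emptyset \}$, observe that the image is a compact system of finite sets, and then apply Lemma~\ref{lemmaTsA}. Your extra care about possibly infinite intervals (where continuity of the pushforward could fail) is a harmless refinement of the same argument, which the paper treats more tersely.
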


\begin{proof}
Let us consider a mapping
$$ A \in \mathcal{P}(\mathbb{N}) \mapsto A' = \{ i : E_{i} \cap A \neq \emptyset \} \in \mathcal{P}(\mathbb{N}). $$
Then $ \mathcal{M}' = \{ A' : A \in \mathcal{M} \} $, being a continuous image of a compact set, is compact itself. It consists of finite sets only, and so Lemma~\ref{lemmaTsA} can be applied. There are $ k \in \mathbb{N} $ and $ \alpha_{1}, \dots, \alpha_{k} \geq 0 $ with $ \sum_{i=1}^{k} \alpha_{i} = 1 $ such that
$$ \forall B \in \mathcal{M}' : \sum_{1 \leq i \leq k, i \in B} \alpha_{i} < \omega. $$
Clearly, $ k $ and $ \alpha_{1}, \dots, \alpha_{k} $ work.
\end{proof}

\section{A Bourgain-Delbaen construction} \label{sec:bourdelb}

We are going to introduce a construction of a $ \mathcal{L}_{\infty} $-space which is the key ingredient of our proof of Theorem~\ref{thmmain}. This space is based on a recent example from \cite{arggasmot}, with the novelty that a compact system of sets of natural numbers is involved as a parameter.

As well as the authors did in \cite[Sect.~5]{arggasmot}, we fix a natural number $ N \geq 3 $ and a constant $ 1 < \theta < N/2 $. We moreover fix a constant $ 0 < \omega < 1/\theta $. Given $ \mathcal{M} \in K(\mathcal{P}(\mathbb{N})) $, we consider the following objects:
\begin{itemize}
\item $ \Delta_{0} = \{ 0 \} $,
\item a sequence $ \Delta_{1}, \Delta_{2}, \dots $ of pairwise disjoint non-empty finite sets that is defined recursively below,
\item to every $ \gamma \in \bigcup_{i=1}^{\infty} \Delta_{i} $, we assign $ \mathrm{age}(\gamma) \in \{ 1, \dots, N \} $ in a way described below,
\item for every $ \gamma \in \bigcup_{i=0}^{\infty} \Delta_{i} $, we denote by $ \mathrm{rank}(\gamma) $ the unique $ p $ so that $ \gamma \in \Delta_{p} $,
\item we put $ \Gamma_{p} = \bigcup_{i=0}^{p} \Delta_{i} $ for $ p = 0, 1, \dots $,
\item a functional $ c_{\gamma}^{*} \in (\ell_{\infty}(\Gamma_{p-1}))^{*} $ is defined below for every $ p \geq 1 $ and $ \gamma \in \Delta_{p} $,
\item an extension operator $ i_{p-1, p} : \ell_{\infty}(\Gamma_{p-1}) \to \ell_{\infty}(\Gamma_{p}) $, where $ p \geq 1 $, is defined by
$$ i_{p-1, p}(x)(\gamma) = \left\{\begin{array}{ll}
x(\gamma), & \quad \gamma \in \Gamma_{p-1}, \\
c_{\gamma}^{*}(x), & \quad \gamma \in \Delta_{p},
\end{array} \right. $$
\item for $ 0 \leq p < q $, we define $ i_{p, q} = i_{q-1, q} \circ \dots \circ i_{p, p+1} : \ell_{\infty}(\Gamma_{p}) \to \ell_{\infty}(\Gamma_{q}) $ and $ i_{p, p} $ as the identity on $ \ell_{\infty}(\Gamma_{p}) $,
\item for $ 0 \leq p \leq q $, let $ r_{q, p} : \ell_{\infty}(\Gamma_{q}) \to \ell_{\infty}(\Gamma_{p}) $ be the corresponding restriction operator,
\item for $ 0 \leq p \leq s \leq q $, let us consider the projections on $ \ell_{\infty}(\Gamma_{q}) $ given by $ P_{[0, p]}^{q} = i_{p, q} \circ r_{q, p} $ and $ P_{(p, s]}^{q} = P_{[0, s]}^{q} - P_{[0, p]}^{q} $.
\end{itemize}

For $ q = 0, 1, \dots $, we define $ \Delta_{q+1} $ as the set of all tuples of one of the following two forms:
$$ (q+1, \{ \varepsilon_{i} \}_{i=1}^{k}, \{ E_{i} \}_{i=1}^{k}, \{ \eta_{i} \}_{i=1}^{k}, \{ h_{i} \}_{i=1}^{k}), \leqno \textrm{(a)} $$
where $ k \in \mathbb{N} $, $ \varepsilon_{i} \in \{ -1, 1 \} $ for $ i = 1, \dots, k $, $ \{ E_{i} \}_{i=1}^{k} $ is a sequence of successive non-empty intervals of $ \{ 0, \dots, q \} $, $ \eta_{i} \in \Gamma_{q} $ and $ h_{i} \in \mathbb{N} \cup \{ 0 \} $ for $ i = 1, \dots, k $, and these numbers satisfy the requirements
$$ \sum_{i=1}^{k} h_{i} \leq q+1 $$
and
$$ \forall A \in \mathcal{M} : \sum_{E_{i} \cap A \neq \emptyset} h_{i} < \omega \cdot (q+1), $$
or, if $ q \geq 2 $,
$$ (q+1, \xi, \{ \varepsilon_{i} \}_{i=1}^{k}, \{ E_{i} \}_{i=1}^{k}, \{ \eta_{i} \}_{i=1}^{k}, \{ h_{i} \}_{i=1}^{k}), \leqno \textrm{(b)} $$
where $ k \in \mathbb{N} $, $ \xi \in \Gamma_{q-1} \setminus \Gamma_{0} $ with $ \mathrm{age}(\xi) < N $, $ \varepsilon_{i} \in \{ -1, 1 \} $ for $ i = 1, \dots, k $, $ \{ E_{i} \}_{i=1}^{k} $ is a sequence of successive non-empty intervals of $ \{ \mathrm{rank}(\xi) + 1, \dots, q \} $, $ \eta_{i} \in \Gamma_{q} $ and $ h_{i} \in \mathbb{N} \cup \{ 0 \} $ for $ i = 1, \dots, k $, and these numbers satisfy the same two requirements as above.

Moreover, we define $ \mathrm{age}(\gamma) $ and $ c_{\gamma}^{*} \in (\ell_{\infty}(\Gamma_{q}))^{*} $ for $ \gamma \in \Delta_{q+1} $ as follows (by $ e^{*}_{\eta} $ we mean the functional $ x \in \ell_{\infty}(\Gamma_{q}) \mapsto x(\eta) $).

For $ \gamma \in \Delta_{q+1} $ of the form (a), we set $ \mathrm{age}(\gamma) = 1 $ and define
$$ c_{\gamma}^{*} = \frac{\theta}{N} \frac{1}{q+1} \sum_{i=1}^{k} h_{i} \varepsilon_{i} e^{*}_{\eta_{i}} \circ P_{E_{i}}^{q}. $$
For $ \gamma \in \Delta_{q+1} $ of the form (b), we set $ \mathrm{age}(\gamma) = \mathrm{age}(\xi) + 1 $ and define
$$ c_{\gamma}^{*} = e^{*}_{\xi} + \frac{\theta}{N} \frac{1}{q+1} \sum_{i=1}^{k} h_{i} \varepsilon_{i} e^{*}_{\eta_{i}} \circ P_{E_{i}}^{q}. $$

Finally,
\begin{itemize}
\item let $ \Gamma = \bigcup_{i=0}^{\infty} \Delta_{i} $,
\item for $ p \geq 0 $, let $ i_{p} : \ell_{\infty}(\Gamma_{p}) \to \ell_{\infty}(\Gamma) $ be given by
$$ i_{p}(x)(\gamma) = \left\{\begin{array}{ll}
x(\gamma), & \quad \gamma \in \Gamma_{p}, \\
i_{p, q}(x)(\gamma), & \quad \gamma \in \Delta_{q}, q > p,
\end{array} \right. $$
(let us point out that $ i_{p} $ maps into $ \ell_{\infty}(\Gamma) $ by Claim~\ref{XMclaim1} below),
\item let $ d_{\gamma} = i_{\mathrm{rank}(\gamma)}(e_{\gamma}) \in \ell_{\infty}(\Gamma) $ for $ \gamma \in \Gamma $, where $ e_{\gamma} $ denotes the basic vector $ \mathbf{1}_{\{ \gamma \}} $ of $ \ell_{\infty}(\Gamma_{\mathrm{rank}(\gamma)}) $,
\item we define $ \mathfrak{X}_{\mathcal{M}} = \overline{\mathrm{span}} \, \{ d_{\gamma} : \gamma \in \Gamma \} $,
\item for $ p \geq 0 $, let $ r_{p} : \mathfrak{X}_{\mathcal{M}} \to \ell_{\infty}(\Gamma_{p}) $ be the corresponding restriction operator,
\item for $ 0 \leq p \leq s $, let us consider the projections on $ \mathfrak{X}_{\mathcal{M}} $ given by $ P_{[0, p]} = i_{p} \circ r_{p} $, $ P_{(p, s]} = P_{[0, s]} - P_{[0, p]} $, $ P_{(p, \infty)} = I - P_{[0, p]} $ and $ P_{[0, \infty)} = I $,
\item we denote by $ \{ d^{*}_{\gamma} \}_{\gamma \in \Gamma} $ the system in $ \mathfrak{X}_{\mathcal{M}}^{*} $ orthogonal to $ \{ d_{\gamma} \}_{\gamma \in \Gamma} $, that is $ d^{*}_{\gamma}(x) = P_{\{ \mathrm{rank}(\gamma) \}}(x)(\gamma) $ for $ x \in \mathfrak{X}_{\mathcal{M}} $,
\item for $ x \in \mathfrak{X}_{\mathcal{M}} $, let $ \mathrm{range} \, x $ be the smallest interval $ E $ of $ \mathbb{N} \cup \{ 0 \} $ such that $ P_{E}(x) = x $.
\end{itemize}

We provide a series of claims which results in a characterization of those $ \mathcal{M} $'s for which $ \mathfrak{X}_{\mathcal{M}} $ is isomorphic to $ c_{0} $ (Proposition~\ref{propXMisomc0}). The first of them is more or less standard, as it is based on an argument from \cite{bourdelb} (see also e.g. \cite[Theorem~3.4]{arghay}). Nevertheless, we include a proof for the convenience of a reader which is not familiar with constructions of this kind.

Let us note that it follows from the claim that $ i_{p} $ witnesses that $ \ell_{\infty}(\Gamma_{p}) $ and $ \mathrm{span} \, \{ d_{\gamma} : \gamma \in \Gamma_{p} \} $ are $ [N/(N-2\theta)] $-isomorphic. For this reason, $ \mathfrak{X}_{\mathcal{M}} $ is a $ \mathcal{L}_{\infty} $-space.

\begin{claim} \label{XMclaim1}
We have $ \Vert i_{p, s} \Vert \leq N/(N-2\theta) $, and consequently $ \Vert i_{p} \Vert \leq N/(N-2\theta) $, $ \Vert P_{(p, s]}^{q} \Vert \leq 2N/(N-2\theta) $, $ \Vert P_{(p, s]} \Vert \leq 2N/(N-2\theta) $ and $ \Vert d^{*}_{\gamma} \Vert \leq 2N/(N-2\theta) $ whenever $ 0 \leq p \leq s \leq q $ and $ \gamma \in \Gamma $.
\end{claim}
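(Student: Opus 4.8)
The plan is to isolate the single nontrivial estimate $\Vert i_{p, s} \Vert \le \lambda$, where I abbreviate $\lambda = N/(N-2\theta)$, and then to read off the other four bounds from it. Note first that $\lambda \ge 1$ and that, since $\theta < N/2$, $\lambda$ is the fixed point $\lambda = 1 + \frac{2\theta}{N}\lambda$ of the map suggested by the recursion below. Granting $\Vert i_{p, s} \Vert \le \lambda$ for all $0 \le p \le s$, the bound $\Vert i_{p} \Vert \le \lambda$ is immediate, since $\Vert i_{p}(x) \Vert = \sup_{q \ge p} \Vert i_{p, q}(x) \Vert$. Because $P_{[0, s]}^{q} = i_{s, q} \circ r_{q, s}$ and every restriction operator has norm at most $1$, we get $\Vert P_{[0, s]}^{q} \Vert \le \lambda$, whence $\Vert P_{(p, s]}^{q} \Vert = \Vert P_{[0, s]}^{q} - P_{[0, p]}^{q} \Vert \le 2\lambda$; the identical subtraction, together with $\Vert P_{[0, p]} \Vert = \Vert i_{p} \circ r_{p} \Vert \le \lambda$, yields $\Vert P_{(p, s]} \Vert \le 2\lambda$. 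Finally $d^{*}_{\gamma}$ is the composition of $P_{\{ \mathrm{rank}(\gamma) \}} = P_{(\mathrm{rank}(\gamma)-1, \mathrm{rank}(\gamma)]}$ with a coordinate evaluation, so $\Vert d^{*}_{\gamma} \Vert \le 2\lambda$. Thus all that genuinely needs work is the estimate for the extension operators.

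I would prove $\Vert i_{p, s} \Vert \le \lambda$ by induction on $s$ (for all $p \le s$ simultaneously). Fixing $x$ with $\Vert x \Vert \le 1$, it suffices to bound $|c_{\gamma}^{*}(i_{p, s-1}(x))|$ for $\gamma \in \Delta_{s}$, since coordinates of lower rank are already controlled by the inductive hypothesis. The main computational tool is the intertwining identity $P_{[0, t]}^{s-1} \circ i_{p, s-1} = i_{p, s-1} \circ P_{[0, \min(t, p)]}^{p}$, which follows immediately from the definitions of $i$ and $r$ and gives $P_{E_{i}}^{s-1} i_{p, s-1}(x) = i_{p, s-1}(P_{E_{i} \cap [0, p]}^{p} x)$. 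Splitting $P_{E_{i}}^{s-1}$ through the two endpoints of the interval $E_{i}$ and applying the inductive hypothesis (which bounds the relevant operators $i_{d, s-1}$ at lower levels by $\lambda$), each coordinate $(P_{E_{i}}^{s-1} i_{p, s-1}(x))(\eta_{i})$ is bounded by $2\lambda$; combined with the normalization $\frac{1}{s}\sum_{i} h_{i} \le 1$ and the weight $\theta/N$, one "generating part" $\frac{\theta}{N}\frac{1}{s}\sum_{i} h_{i} \varepsilon_{i} e^{*}_{\eta_{i}} \circ P_{E_{i}}^{s-1}$ contributes at most $\frac{2\theta}{N}\lambda$.

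For $\gamma$ of form (b) the summand $e^{*}_{\xi}(i_{p, s-1}(x)) = i_{p, s-1}(x)(\xi)$ drives a recursion, and a direct coordinatewise bound cannot close because $\lambda + \frac{2\theta}{N}\lambda > \lambda$. Instead one unwinds the age chain $\gamma = \gamma_{a} \to \xi = \gamma_{a-1} \to \dots \to \gamma_{1}$ determined by the construction (each $\gamma_{t-1}$ being the node $\xi$ attached to $\gamma_{t}$, with $\gamma_{1}$ of form (a)), expressing $i_{p, s}(x)(\gamma)$ as a sum of $a \le N$ generating parts, the $t$-th of which is supported, by the defining restriction $E_{i} \subset \{ \mathrm{rank}(\xi)+1, \dots, q \}$, on the rank-block $(\mathrm{rank}(\gamma_{t-1}), \mathrm{rank}(\gamma_{t}))$. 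Because these blocks are pairwise disjoint, the successive parts must be discounted geometrically rather than merely added, so that the total beyond the rank-$\le p$ coordinates is governed by $\sum_{t=1}^{\infty}(\frac{2\theta}{N})^{t} = \lambda - 1$; together with the contribution $\le 1$ from the coordinates of rank $\le p$ this gives $|i_{p, s}(x)(\gamma)| \le \lambda$ and closes the induction.

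The delicate point, and the only one, is exactly this last step. Estimating the $a$ generating parts independently gives merely $a \cdot \frac{2\theta}{N}\lambda \le 2\theta\lambda$, which is worthless since $2\theta > 1$; to reach $\lambda$ one must exploit the age/weight mechanism so that each additional age level carries the extra factor $\frac{2\theta}{N} < 1$. Making this geometric discounting rigorous, that is, organizing the full tree of generating parts so that their disjoint block supports and the normalizations $\sum_{i} h_{i} \le q+1$ force a convergent geometric estimate rather than a linear-in-age one, is the heart of the argument, and is where I would follow the Bourgain--Delbaen scheme as in \cite{arghay}. Everything else is routine bookkeeping.
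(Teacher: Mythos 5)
Your reductions to the single estimate $\Vert i_{p,s}\Vert \le \lambda$, $\lambda = N/(N-2\theta)$, and your treatment of form (a), are correct and agree with the paper. The gap is in form (b), exactly at the point you yourself flag as ``the heart of the argument'' and defer to the literature. The assertion that ``a direct coordinatewise bound cannot close because $\lambda + \frac{2\theta}{N}\lambda > \lambda$'' is false, and the mechanism you propose instead --- unwinding the age chain and discounting the successive generating parts geometrically because their rank-blocks are disjoint --- does not exist: disjointness of supports produces no decay, each generating part is bounded only by $\frac{2\theta}{N}\lambda$ uniformly in its position $t$ in the chain, and nothing in the construction supplies a factor $\left(\frac{2\theta}{N}\right)^{t}$. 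Had more than one generating part genuinely contributed, your sum would be linear in the age, not geometric, and the induction would not close.

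What actually closes the induction is a dichotomy already implicit in your own intertwining identity $P^{q}_{E_i}\, i_{p,q}(x) = i_{p,q}\bigl(P^{p}_{E_i\cap[0,p]}x\bigr)$. For $\gamma\in\Delta_{q+1}$ of form (b): either $\mathrm{rank}(\xi)\le p$, in which case $e^{*}_{\xi}(i_{p,q}(x)) = x(\xi)$ is bounded by $\Vert x\Vert$ rather than by $\lambda\Vert x\Vert$, and the total is at most $\Vert x\Vert\bigl(1+\frac{2\theta}{N-2\theta}\bigr)=\lambda\Vert x\Vert$; or $\mathrm{rank}(\xi)>p$, in which case every $E_i\subset\{\mathrm{rank}(\xi)+1,\dots,q\}\subset\{p+1,\dots,q\}$ has $E_i\cap[0,p]=\emptyset$, so the entire generating part vanishes on $i_{p,q}(x)$ and the total is $|e^{*}_{\xi}(i_{p,q}(x))|\le\Vert i_{p,q}\Vert\,\Vert x\Vert\le\lambda\Vert x\Vert$. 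In neither case do you pay $\lambda+\frac{2\theta}{N}\lambda$. Equivalently, in your unwinding of the age chain: since the rank-blocks increase, all generating parts whose block lies entirely above $p$ are annihilated on the range of $i_{p,q}$, so at most one generating part survives, contributing at most $\frac{2\theta}{N}\lambda=\lambda-1$, while the bottom of the chain contributes at most $\Vert x\Vert$; no geometric series is involved (its sum merely coincides numerically with the one surviving term). This is precisely the paper's fact (ii) and its three-case analysis; without it your proof is incomplete, and the repair you sketch would not work as described.
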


\begin{proof}
We show by induction on $ s $ that $ \Vert i_{p, s} \Vert \leq N/(N-2\theta) $ for every $ p \leq s $. Assume that $ p \leq s $ and that $ \Vert i_{t, q} \Vert \leq N/(N-2\theta) $ whenever $ t \leq q \leq s-1 $. We need to show that $ |i_{p, s}(x)(\gamma)| \leq \Vert x \Vert \cdot N/(N-2\theta) $ for $ x \in \ell_{\infty}(\Gamma_{p}) $ and $ \gamma \in \Gamma_{s} $. If $ \gamma \in \Gamma_{p} $, then $ i_{p, s}(x)(\gamma) = x(\gamma) $. So, let $ \gamma \in \Gamma_{s} \setminus \Gamma_{p} $. There is $ q $ with $ p \leq q \leq s-1 $ such that $ \gamma \in \Delta_{q+1} $. Note that
$$ i_{p, s}(x)(\gamma) = i_{p, q+1}(x)(\gamma) = c^{*}_{\gamma}(i_{p, q}(x)). $$
We need to check the following two simple facts first:\\
(i) $ \Vert P_{E}^{q}(i_{p, q}(x)) \Vert \leq \Vert x \Vert \cdot 2N/(N-2\theta) $ for an interval $ E $ of $ \{ 0, 1, \dots, q \} $,\\
(ii) $ P_{E}^{q}(i_{p, q}(x)) = 0 $ for an interval $ E $ of $ \{ p+1, \dots, q \} $.

If $ p \leq t \leq q $, then $ P_{[0, t]}^{q}(i_{p, q}(x)) = i_{t, q}(r_{q, t}(i_{p, q}(x))) = i_{t, q}(i_{p, t}(x)) = i_{p, q}(x) $. From this, (ii) follows. Moreover, by the induction hypothesis, $ \Vert P_{[0, t]}^{q}(i_{p, q}(x)) \Vert = \Vert i_{p, q}(x) \Vert \leq \Vert i_{p, q} \Vert \Vert x \Vert \leq \Vert x \Vert \cdot N/(N-2\theta) $. To check (i), it remains to show that $ \Vert P_{[0, t]}^{q}(i_{p, q}(x)) \Vert \leq \Vert x \Vert \cdot N/(N-2\theta) $ also for $ t < p $.

If $ 0 \leq t < p $, then $ P_{[0, t]}^{q}(i_{p, q}(x)) = i_{t, q}(r_{q, t}(i_{p, q}(x))) = i_{t, q}(r_{p, t}(x)) $, and so $ \Vert P_{[0, t]}^{q}(i_{p, q}(x)) \Vert \leq \Vert i_{t, q} \Vert \Vert r_{p, t}(x) \Vert \leq \Vert x \Vert \cdot N/(N-2\theta) $.

So, (i) and (ii) are checked. Now, if $ \gamma $ is of the form (a), then
\begin{align*}
|c^{*}_{\gamma}(i_{p, q}(x))| & \leq \frac{\theta}{N} \frac{1}{q+1} \sum_{i=1}^{k} h_{i} \cdot \Vert x \Vert \cdot \frac{2N}{N-2\theta} \\
 & \leq \Vert x \Vert \cdot \frac{2\theta}{N-2\theta} \leq \Vert x \Vert \cdot \frac{N}{N-2\theta}.
\end{align*}
If $ \gamma $ is of the form (b) and $ \mathrm{rank}(\xi) \leq p $, then $ e^{*}_{\xi}(i_{p, q}(x)) = x(\xi) $ and
\begin{align*}
|c^{*}_{\gamma}(i_{p, q}(x))| & \leq \Vert x \Vert + \frac{\theta}{N} \frac{1}{q+1} \sum_{i=1}^{k} h_{i} \cdot \Vert x \Vert \cdot \frac{2N}{N-2\theta} \\
 & \leq \Vert x \Vert + \Vert x \Vert \cdot \frac{2\theta}{N-2\theta} = \Vert x \Vert \cdot \frac{N}{N-2\theta}.
\end{align*}
If $ \gamma $ is of the form (b) and $ \mathrm{rank}(\xi) > p $, then
$$ |c^{*}_{\gamma}(i_{p, q}(x))| = |e^{*}_{\xi}(i_{p, q}(x))| \leq \Vert x \Vert \cdot \Vert i_{p, q} \Vert \leq \Vert x \Vert \cdot \frac{N}{N-2\theta}. $$
In all three cases, $ |i_{p, s}(x)(\gamma)| = |c^{*}_{\gamma}(i_{p, q}(x))| \leq \Vert x \Vert \cdot N/(N-2\theta) $.
\end{proof}

\begin{claim}[{cf. \cite[Proposition~5.3]{arggasmot}}] \label{XMclaim2}
Let $ q \geq 0 $ and $ \gamma \in \Delta_{q+1} $. If $ \gamma $ is of the form {\rm (a)}, then
$$ e_{\gamma}^{*} = d_{\gamma}^{*} + \frac{\theta}{N} \frac{1}{q+1} \sum_{i=1}^{k} h_{i} \varepsilon_{i} e^{*}_{\eta_{i}} \circ P_{E_{i}}. $$
If $ \gamma $ is of the form {\rm (b)}, then
$$ e_{\gamma}^{*} = e^{*}_{\xi} + d_{\gamma}^{*} + \frac{\theta}{N} \frac{1}{q+1} \sum_{i=1}^{k} h_{i} \varepsilon_{i} e^{*}_{\eta_{i}} \circ P_{E_{i}}. $$
(Here, by $ e^{*}_{\eta} $ we mean the functional $ x \in \mathfrak{X}_{\mathcal{M}} \mapsto x(\eta) $).
\end{claim}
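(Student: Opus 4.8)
The plan is to derive both formulas from the single algebraic identity
$$ e_{\gamma}^{*}(x) = d_{\gamma}^{*}(x) + c_{\gamma}^{*}(r_{q}(x)), \qquad x \in \mathfrak{X}_{\mathcal{M}}, $$
and then to expand the term $ c_{\gamma}^{*}(r_{q}(x)) $ using the explicit definition of $ c_{\gamma}^{*} $.

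First I would prove this identity. Since $ \mathrm{rank}(\gamma) = q+1 $, we have $ d_{\gamma}^{*}(x) = P_{\{ q+1 \}}(x)(\gamma) = P_{(q, q+1]}(x)(\gamma) = P_{[0, q+1]}(x)(\gamma) - P_{[0, q]}(x)(\gamma) $. Because $ \gamma \in \Delta_{q+1} \subset \Gamma_{q+1} $ and $ i_{q+1} $ acts as the identity on the coordinates in $ \Gamma_{q+1} $, the first term equals $ r_{q+1}(x)(\gamma) = x(\gamma) = e_{\gamma}^{*}(x) $. For the second term, $ P_{[0, q]}(x)(\gamma) = i_{q}(r_{q}(x))(\gamma) = i_{q, q+1}(r_{q}(x))(\gamma) = c_{\gamma}^{*}(r_{q}(x)) $, where the last equality is just the definition of the extension operator $ i_{q, q+1} $ on $ \Delta_{q+1} $. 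Subtracting yields the identity above.

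The second ingredient is the intertwining relation $ r_{q} \circ P_{E} = P_{E}^{q} \circ r_{q} $ on $ \mathfrak{X}_{\mathcal{M}} $, valid for every interval $ E $ of $ \{ 0, \dots, q \} $. By linearity it suffices to treat $ E = [0, p] $ with $ p \leq q $, and here it follows from the two elementary relations $ r_{q} \circ i_{p} = i_{p, q} $ (both sides send $ y \in \ell_{\infty}(\Gamma_{p}) $ to the vector agreeing with $ i_{p, s}(y) $ on each $ \Delta_{s} $, $ s \leq q $) and $ r_{q, p} \circ r_{q} = r_{p} $, since then $ r_{q}(P_{[0, p]}(x)) = r_{q}(i_{p}(r_{p}(x))) = i_{p, q}(r_{q, p}(r_{q}(x))) = P_{[0, p]}^{q}(r_{q}(x)) $. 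In particular, for $ \eta \in \Gamma_{q} $ the functional $ e_{\eta}^{*} \circ P_{E}^{q} $ on $ \ell_{\infty}(\Gamma_{q}) $ satisfies $ (e_{\eta}^{*} \circ P_{E}^{q})(r_{q}(x)) = (e_{\eta}^{*} \circ P_{E})(x) $.

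Finally I would substitute. For $ \gamma $ of the form (a), expanding $ c_{\gamma}^{*}(r_{q}(x)) = \frac{\theta}{N} \frac{1}{q+1} \sum_{i=1}^{k} h_{i} \varepsilon_{i} (e_{\eta_{i}}^{*} \circ P_{E_{i}}^{q})(r_{q}(x)) $ and applying the intertwining relation term by term turns each summand into $ (e_{\eta_{i}}^{*} \circ P_{E_{i}})(x) $, which is exactly the asserted formula. For $ \gamma $ of the form (b), the only new feature is the leading summand $ e_{\xi}^{*}(r_{q}(x)) $; since $ \xi \in \Gamma_{q-1} \subset \Gamma_{q} $, this equals $ x(\xi) = e_{\xi}^{*}(x) $, and the remaining sum is handled as before. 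The only point requiring any care is the intertwining relation of the previous paragraph; once it is in place, both formulas are immediate.
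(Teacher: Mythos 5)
Your proposal is correct and follows essentially the same route as the paper: both rest on the identity $e_{\gamma}^{*}(x) = d_{\gamma}^{*}(x) + c_{\gamma}^{*}(r_{q}(x))$ (obtained by comparing $P_{[0,q+1]}$ and $P_{[0,q]}$ at the coordinate $\gamma$) together with the intertwining relation $(e^{*}_{\eta}\circ P^{q}_{[0,p]})(r_{q}(x)) = (e^{*}_{\eta}\circ P_{[0,p]})(x)$, which the paper also verifies via $i_{p,q}\circ r_{q,p}\circ r_{q} = i_{p}\circ r_{p}$ read at $\eta\in\Gamma_{q}$. The details you supply (including $r_{q}\circ i_{p}=i_{p,q}$ and the reduction of general intervals $E$ to initial segments by linearity) are all sound.
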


\begin{proof}
It is sufficient to consider only the form (b) for $ \gamma $. To distinguish the functionals $ e^{*}_{\eta} $ acting on $ \Gamma_{q} $ from those acting on $ \mathfrak{X}_{\mathcal{M}} $, we use the notation $ e^{*}_{\eta, \Gamma_{q}} $.

Let us pick $ x \in \mathfrak{X}_{\mathcal{M}} $. Note first that $ e^{*}_{\gamma}(P_{[0, q]}(x)) = e^{*}_{\gamma}(i_{q}(r_{q}(x))) = i_{q}(r_{q}(x))(\gamma) = i_{q, q+1}(r_{q}(x))(\gamma) = c^{*}_{\gamma}(r_{q}(x)) $ and $ e^{*}_{\gamma}(P_{[0, q+1]}(x)) = e^{*}_{\gamma}(i_{q+1}(r_{q+1}(x))) = i_{q+1}(r_{q+1}(x))(\gamma) = x(\gamma) = e^{*}_{\gamma}(x) $. We obtain
$$ e^{*}_{\gamma}(x) = e^{*}_{\gamma}(P_{[0, q+1]}(x)) = e^{*}_{\gamma}(P_{[0, q]}(x)) + e^{*}_{\gamma}(P_{\{ q+1 \}}(x)) = c^{*}_{\gamma}(r_{q}(x)) + d^{*}_{\gamma}(x). $$

Note further that, for $ p \leq q $ and $ \eta \in \Gamma_{q} $, we have
\begin{align*}
(e^{*}_{\eta, \Gamma_{q}} \circ P_{[0, p]}^{q})(r_{q}(x)) & = i_{p, q}(r_{q, p}(r_{q}(x)))(\eta) = i_{p, q}(r_{p}(x))(\eta) \\
 & = i_{p}(r_{p}(x))(\eta) = (e^{*}_{\eta} \circ P_{[0, p]})(x).
\end{align*}
It follows that
\begin{align*}
c_{\gamma}^{*}(r_{q}(x)) & = e^{*}_{\xi, \Gamma_{q}}(r_{q}(x)) + \frac{\theta}{N} \frac{1}{q+1} \sum_{i=1}^{k} h_{i} \varepsilon_{i} (e^{*}_{\eta_{i}, \Gamma_{q}} \circ P_{E_{i}}^{q}) (r_{q}(x)) \\
 & = e^{*}_{\xi}(x) + \frac{\theta}{N} \frac{1}{q+1} \sum_{i=1}^{k} h_{i} \varepsilon_{i} (e^{*}_{\eta_{i}} \circ P_{E_{i}}) (x).
\end{align*}
As this works for any $ x \in \mathfrak{X}_{\mathcal{M}} $, the proof is finished.
\end{proof}

\begin{claim}[{cf. \cite[Proposition~5.6]{arggasmot}}] \label{XMclaim3}
Let $ u_{0}, u_{1}, \dots, u_{m} $ be elements of $ \mathfrak{X}_{\mathcal{M}} $ with $ \Vert u_{j} \Vert \leq 1 $. If there is $ A \in \mathcal{M} $ such that
$$ \mathrm{range} \, u_{0} < n_{1} \leq \mathrm{range} \, u_{1} < n_{2} \leq \dots < n_{m} \leq \mathrm{range} \, u_{m} $$
for some $ n_{1}, \dots, n_{m} \in A $, then
$$ \Big\Vert \sum_{j=0}^{m} u_{j} \Big\Vert \leq C := \frac{1}{1-\theta \omega} \cdot \frac{2N(N+\theta)}{N-2\theta}. $$
\end{claim}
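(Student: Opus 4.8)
The plan is to use that $ \mathfrak{X}_{\mathcal{M}} $ carries the supremum norm of $ \ell_{\infty}(\Gamma) $, so that $ \Vert x \Vert = \sup_{\gamma \in \Gamma} |e^{*}_{\gamma}(x)| $; hence it suffices to bound $ |\phi(u)| $, where $ u = \sum_{j=0}^{m} u_{j} $ and $ \phi $ runs through a class of functionals rich enough to be stable under the analysis of Claim~\ref{XMclaim2}. Concretely I would work with the class $ \mathcal{C} = \{ e^{*}_{\eta} \circ P_{E} : \eta \in \Gamma, \ E \text{ an interval} \} $, which contains every $ e^{*}_{\gamma} $, and set $ M = \sup |\phi(\sum_{j} u_{j})| $, the supremum being over $ \phi \in \mathcal{C} $ and over all configurations of the present form. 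The goal becomes the self-improving bound $ M \leq 2N(N+\theta)/(N-2\theta) + \theta \omega M $, which, since $ \theta \omega < 1 $, forces $ M \leq C $.

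Given $ \phi = e^{*}_{\eta} \circ P_{E} $, I would iterate Claim~\ref{XMclaim2} along the chain of form-(b) predecessors $ \eta = \eta_{a}, \dots, \eta_{1} $ (so $ a = \mathrm{age}(\eta) \leq N $ and $ \eta_{1} $ is of form (a)) to write
\begin{equation*}
e^{*}_{\eta} \circ P_{E} = \sum_{l=1}^{a} d^{*}_{\eta_{l}} \circ P_{E} + \frac{\theta}{N} \sum_{l=1}^{a} \frac{1}{\mathrm{rank}(\eta_{l})} \sum_{i} h^{(l)}_{i} \varepsilon^{(l)}_{i}\, e^{*}_{\zeta^{(l)}_{i}} \circ P_{F^{(l)}_{i} \cap E},
\end{equation*}
using that the interval projections commute and multiply to intersections, $ P_{F} P_{E} = P_{F \cap E} $. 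In particular each $ e^{*}_{\zeta^{(l)}_{i}} \circ P_{F^{(l)}_{i} \cap E} $ again lies in $ \mathcal{C} $, which is exactly the stability I need.

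For the diagonal part, $ d^{*}_{\eta_{l}} \circ P_{E} $ equals $ d^{*}_{\eta_{l}} $ or $ 0 $ according to whether $ \mathrm{rank}(\eta_{l}) \in E $, and since the ranges $ \mathrm{range}\, u_{j} $ are pairwise disjoint successive intervals, each $ d^{*}_{\eta_{l}} $ acts non-trivially on at most one $ u_{j} $; with $ \Vert d^{*}_{\eta_{l}} \Vert \leq 2N/(N-2\theta) $ from Claim~\ref{XMclaim1} and $ a \leq N $, the diagonal part contributes at most $ 2N^{2}/(N-2\theta) $. The decisive step is the weighted part, where the new input is the $ \mathcal{M} $-admissibility of the separating points $ n_{1}, \dots, n_{m} \in A $. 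I would split the intervals $ G^{(l)}_{i} := F^{(l)}_{i} \cap E $ according to whether they meet $ A $. If $ G^{(l)}_{i} $ avoids $ A $, then it meets at most one range $ \mathrm{range}\, u_{j} $ --- otherwise, being an interval, it would contain a separating point $ n_{j'} \in A $ lying between two ranges it meets, a contradiction; hence $ P_{G^{(l)}_{i}} u = P_{G^{(l)}_{i}} u_{j} $ and $ |e^{*}_{\zeta^{(l)}_{i}}(P_{G^{(l)}_{i}} u)| \leq \Vert e^{*}_{\zeta^{(l)}_{i}} \circ P_{G^{(l)}_{i}} \Vert \leq 2N/(N-2\theta) $. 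Combined with $ \sum_{i} h^{(l)}_{i} \leq \mathrm{rank}(\eta_{l}) $ and $ a \leq N $, these intervals contribute at most $ 2N\theta/(N-2\theta) $.

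It remains to treat the intervals $ G^{(l)}_{i} $ that meet $ A $. Here I would invoke the growth requirement in the forms (a),(b): since $ G^{(l)}_{i} \cap A \neq \emptyset $ forces $ F^{(l)}_{i} \cap A \neq \emptyset $, the relevant indices satisfy $ \sum h^{(l)}_{i} < \omega \cdot \mathrm{rank}(\eta_{l}) $, while each $ |e^{*}_{\zeta^{(l)}_{i}}(P_{G^{(l)}_{i}} u)| \leq M $ because $ e^{*}_{\zeta^{(l)}_{i}} \circ P_{G^{(l)}_{i}} \in \mathcal{C} $; thus their contribution is at most $ \frac{\theta}{N} \cdot a \cdot \omega \cdot M \leq \theta \omega M $, and adding the three estimates yields the recursion. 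I expect the main obstacle to be organizing this as a genuine induction rather than a circular appeal to $ M $: running the argument as an induction on $ \mathrm{rank}(\eta) $ (the functionals $ e^{*}_{\zeta^{(l)}_{i}} $ having strictly smaller rank, so that the term $ \theta \omega M $ is replaced by $ \theta \omega $ times the already-established bound for lower ranks) both closes the estimate and delivers finiteness of $ M $ as a byproduct. The technical heart is then to verify carefully the two structural facts that power the proof --- the identity $ P_{F} P_{E} = P_{F \cap E} $ and the claim that an interval disjoint from $ A $ meets at most one range --- which together play the role of the growth condition $ n \geq (\#\Gamma_{\mathrm{rank}(\xi)})^{2} $ in \cite[Proposition~5.6]{arggasmot}.
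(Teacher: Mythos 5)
Your proposal is correct and follows essentially the same route as the paper: an induction on $\mathrm{rank}$ driven by the decomposition of Claim~\ref{XMclaim2}, with the weighted sum split according to whether the intervals meet $A$ (the $A$-avoiding ones touching at most one $u_{j}$ and hence bounded by $\Vert P_{E_{i}\cap E}\Vert$, the $A$-meeting ones controlled by the $\omega\cdot(q+1)$ condition), closed via $\theta\omega<1$. The only difference is bookkeeping: the paper keeps the age inside the induction hypothesis, proving $|(e^{*}_{\gamma}\circ P_{E})(u)|\le C\cdot\mathrm{age}(\gamma)/N$ and peeling off one predecessor per step, whereas you unroll the whole predecessor chain and use $\mathrm{age}\le N$ globally --- both give the same recursion and the same constant $C$.
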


\begin{proof}
Let $ u = \sum_{j=0}^{m} u_{j} $. We prove by induction on $ \mathrm{rank}(\gamma) $ that, for every $ \gamma \in \Gamma $ and every interval $ E $ of $ \mathbb{N} \cup \{ 0 \} $,
$$ | (e^{*}_{\gamma} \circ P_{E}) (u) | \leq C $$
and even
$$ | (e^{*}_{\gamma} \circ P_{E}) (u) | \leq C \cdot \frac{\mathrm{age}(\gamma)}{N} \quad \textrm{if } \mathrm{rank}(\gamma) \geq 1. $$
This is easy when $ \mathrm{rank}(\gamma) = 0 $ (if $ \gamma \in \Delta_{0} $, then $ (e^{*}_{\gamma} \circ P_{E}) (u_{j}) = 0 $ for $ j \geq 1 $). Assume that $ q \in \mathbb{N} \cup \{ 0 \} $ and that the assertion holds for each element of $ \Gamma_{q} $ and each interval of $ \mathbb{N} \cup \{ 0 \} $ and fix $ \gamma \in \Delta_{q+1} $ and an interval $ E $ of $ \mathbb{N} \cup \{ 0 \} $. If $ \gamma $ is of the form (a), then $ \mathrm{age}(\gamma) - 1 = 0 $, and if $ \gamma $ is of the form (b), then $ \mathrm{age}(\gamma) - 1 = \mathrm{age}(\xi) $. Using Claim~\ref{XMclaim2} and the induction hypothesis, we obtain in both cases that $ | (e^{*}_{\gamma} \circ P_{E}) (u) | $ is less than or equal to
$$ C \cdot \frac{\mathrm{age}(\gamma) - 1}{N} + | (d^{*}_{\gamma} \circ P_{E}) (u) | + \frac{\theta}{N} \frac{1}{q+1} \sum_{i=1}^{k} h_{i} | (e^{*}_{\eta_{i}} \circ P_{E_{i} \cap E}) (u) |. $$
Let us show first that
$$ | (d^{*}_{\gamma} \circ P_{E}) (u) | \leq \frac{2N}{N-2\theta}. $$
If $ \mathrm{rank}(\gamma) \notin E $, then $ (d^{*}_{\gamma} \circ P_{E}) (u) = 0 $. Assuming $ \mathrm{rank}(\gamma) \in E $, we have $ (d^{*}_{\gamma} \circ P_{E}) (u) = d^{*}_{\gamma}(u) $. As $ \mathrm{rank}(\gamma) $ belongs to the range of at most one $ u_{j} $, we have $ d^{*}_{\gamma}(u) = d^{*}_{\gamma}(u_{j}) $ for some $ j $. Consequently, $ | (d^{*}_{\gamma} \circ P_{E}) (u) | \leq \Vert d^{*}_{\gamma} \Vert \Vert u_{j} \Vert \leq [2N/(N-2\theta)] \cdot 1 $.

Note that if $ E_{i} \cap A = \emptyset $, then $ E_{i} $ contains no $ n_{j} $, and so it intersects the range of at most one $ u_{j} $. Thus, there is $ j(i) $ such that $ P_{E_{i} \cap E} (u) = P_{E_{i} \cap E}(u_{j(i)}) $. We obtain
\begin{align*}
\sum_{i=1}^{k} h_{i} & | (e^{*}_{\eta_{i}} \circ P_{E_{i} \cap E}) (u) | \\
 & = \sum_{E_{i} \cap A \neq \emptyset} h_{i} | (e^{*}_{\eta_{i}} \circ P_{E_{i} \cap E}) (u) | + \sum_{E_{i} \cap A = \emptyset} h_{i} | (e^{*}_{\eta_{i}} \circ P_{E_{i} \cap E}) (u_{j(i)}) | \\
 & \leq \sum_{E_{i} \cap A \neq \emptyset} h_{i} \cdot C + \sum_{E_{i} \cap A = \emptyset} h_{i} \cdot \Vert P_{E_{i} \cap E} \Vert \Vert u_{j(i)} \Vert \\
 & \leq \omega \cdot (q+1) \cdot C + (q+1) \cdot \frac{2N}{N-2\theta}.
\end{align*}
Therefore,
\begin{align*}
| (e^{*}_{\gamma} \circ P_{E}) (u) | \leq & \, C \cdot \frac{\mathrm{age}(\gamma) - 1}{N} + \frac{2N}{N-2\theta} \\
 & \quad \quad + \frac{\theta}{N} \frac{1}{q+1} \cdot \Big( \omega \cdot (q+1) \cdot C + (q+1) \cdot \frac{2N}{N-2\theta} \Big) \\
 = & \, C \cdot \frac{\mathrm{age}(\gamma) - 1}{N} + \frac{2(N+\theta)}{N-2\theta} + \theta \omega \cdot \frac{C}{N} \\
 = & \, C \cdot \frac{\mathrm{age}(\gamma)}{N}.
\end{align*}
This concludes the proof.
\end{proof}

\begin{claim} \label{XMclaim4}
Suppose that $ \mathcal{M} $ consists of finite sets only. Let $ \varepsilon > 0 $ and let $ x_{1}, x_{2}, \dots $ be a sequence in $ \mathfrak{X}_{\mathcal{M}} $ such that $ \Vert x_{j} \Vert \geq 1 $ and
$$ \mathrm{range} \, x_{1} < m_{1} < \mathrm{range} \, x_{2} < m_{2} < \mathrm{range} \, x_{3} < m_{3} < \dots $$
for some $ m_{1}, m_{2}, \dots $. Then there is $ r \in \mathbb{N} $ such that $ \Vert \sum_{j = 1}^{r} x_{j} \Vert > \theta - \varepsilon $.
\end{claim}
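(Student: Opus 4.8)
The plan is to use that the norm on $\mathfrak{X}_{\mathcal{M}}$ is just the supremum norm inherited from $\ell_{\infty}(\Gamma)$, so it suffices to produce a single $\gamma \in \Gamma$ with $|u(\gamma)| = |e_{\gamma}^{*}(u)| > \theta - \varepsilon$, where $u = \sum_{j=1}^{r} x_{j}$. The engine for making $e_{\gamma}^{*}(u)$ large is Claim~\ref{XMclaim2}: each passage through form (b) prepends a copy of $e_{\xi}^{*}$ and adds a weighted average $\frac{\theta}{N}\frac{1}{q+1}\sum_{i} h_{i}\varepsilon_{i} e_{\eta_{i}}^{*}\circ P_{E_{i}}$. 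I would therefore build a chain $\gamma_{1}, \dots, \gamma_{N}$ with $\mathrm{age}(\gamma_{a}) = a$, where $\gamma_{1}$ is of form (a) and each $\gamma_{a}$ for $a \geq 2$ is of form (b) with $\xi = \gamma_{a-1}$. Iterating Claim~\ref{XMclaim2} yields
$$ e_{\gamma_{N}}^{*} = \sum_{a=1}^{N} d_{\gamma_{a}}^{*} + \frac{\theta}{N}\sum_{a=1}^{N} \frac{1}{q_{a}+1}\sum_{i} h_{i}^{(a)}\varepsilon_{i}^{(a)}\, e_{\eta_{i}^{(a)}}^{*}\circ P_{E_{i}^{(a)}}, $$
so that $N$ contributions of size roughly $\frac{\theta}{N}$ should accumulate to about $\theta$.

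The chain is assembled level by level, each level consuming a fresh batch of blocks. At level $a$ I consider the ranges of the not-yet-used blocks beyond $\mathrm{rank}(\gamma_{a-1})$; these are successive intervals of $\mathbb{N} \cup \{ 0 \}$, so Lemma~\ref{lemmaTsB} (available because $\mathcal{M}$ consists of finite sets) produces $k_{a}$ and weights $\alpha_{1}^{(a)}, \dots, \alpha_{k_{a}}^{(a)} \geq 0$ with $\sum_{i} \alpha_{i}^{(a)} = 1$ and $\sum_{E_{i}^{(a)} \cap A \neq \emptyset} \alpha_{i}^{(a)} < \omega$ for every $A \in \mathcal{M}$. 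I take $E_{i}^{(a)} = \mathrm{range}\, x_{j(a,i)}$ for the corresponding blocks, pick $\eta_{i}^{(a)}$ to nearly attain $\Vert x_{j(a,i)} \Vert \geq 1$, and choose $\varepsilon_{i}^{(a)}$ so that $\varepsilon_{i}^{(a)} x_{j(a,i)}(\eta_{i}^{(a)})$ is positive. For a rank $q_{a}$ chosen large I set $h_{i}^{(a)} = \lfloor \alpha_{i}^{(a)}(q_{a}+1) \rfloor$; then $\sum_{i} h_{i}^{(a)} \leq q_{a}+1$ and $\sum_{E_{i}^{(a)} \cap A \neq \emptyset} h_{i}^{(a)} < \omega (q_{a}+1)$, making $\gamma_{a}$ a legitimate node, while $\sum_{i} h_{i}^{(a)}/(q_{a}+1)$ is as close to $1$ as desired. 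Since the ranges are pairwise disjoint we have $P_{E_{i}^{(a)}}(u) = x_{j(a,i)}$, and because $\eta_{i}^{(a)}$ need not lie in $E_{i}^{(a)}$, taking $q_{a}$ larger than every $\mathrm{rank}(\eta_{i}^{(a)})$ is harmless; hence the level-$a$ inner sum $S_{a} := \frac{1}{q_{a}+1}\sum_{i} h_{i}^{(a)}\varepsilon_{i}^{(a)} e_{\eta_{i}^{(a)}}^{*}(P_{E_{i}^{(a)}}(u))$ exceeds $1$ up to a small error. I must also check admissibility of the chain: choosing each batch beyond $\mathrm{rank}(\gamma_{a-1})$ and $q_{a}$ large forces $\gamma_{a-1} \in \Gamma_{q_{a}-1} \setminus \Gamma_{0}$ with $\mathrm{age}(\gamma_{a-1}) = a-1 < N$ and puts the level-$a$ intervals inside $\{ \mathrm{rank}(\gamma_{a-1})+1, \dots, q_{a} \}$.

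The step I expect to be the main obstacle is controlling the diagonal terms $\sum_{a} d_{\gamma_{a}}^{*}(u)$, which carry no sign information and could in principle cancel the gain. The remedy is to place $q_{a}+1$ on one of the separators $m_{t}$: since no block range contains a separator, $P_{\{ q_{a}+1 \}}(u) = 0$, and therefore $d_{\gamma_{a}}^{*}(u) = P_{\{ \mathrm{rank}(\gamma_{a}) \}}(u)(\gamma_{a}) = 0$ for every $a$. With these terms eliminated, $e_{\gamma_{N}}^{*}(u) = \frac{\theta}{N}\sum_{a=1}^{N} S_{a}$ exceeds $\theta (1-\delta)^{2}$ for the governing approximation parameter $\delta$. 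Choosing $\delta$ small enough that $\theta (1-\delta)^{2} > \theta - \varepsilon$ and letting $r$ be the largest block index used gives $\Vert \sum_{j=1}^{r} x_{j} \Vert \geq |e_{\gamma_{N}}^{*}(u)| > \theta - \varepsilon$, as required.
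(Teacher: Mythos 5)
Your proposal is correct and follows essentially the same route as the paper's proof: a chain $\gamma_{1},\dots,\gamma_{N}$ of ages $1,\dots,N$ built via Lemma~\ref{lemmaTsB}, with $E_{i}$ the ranges of fresh blocks, $h_{i}=\lfloor(q+1)\alpha_{i}\rfloor$, the evaluation functional expanded by Claim~\ref{XMclaim2}, and the $d_{\gamma}^{*}$ terms killed by placing each rank on a separator $m_{t}$. The only cosmetic difference is that the paper carries the running inductive estimate $e_{\gamma_{n}}^{*}(\sum_{j=1}^{r(n)}x_{j})>\frac{\theta-\varepsilon}{N}\cdot n$ level by level, whereas you telescope the full expansion of $e_{\gamma_{N}}^{*}$ and sum at the end.
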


\begin{proof}
We find recursively $ \gamma_{1}, \dots, \gamma_{N} \in \Gamma $ and $ r(1), \dots, r(N) \in \mathbb{N} $ such that $ \mathrm{age}(\gamma_{n}) = n $, $ \mathrm{rank}(\gamma_{n}) = m_{r(n)} $ and $ e^{*}_{\gamma_{n}}(\sum_{j=1}^{r(n)} x_{j}) > \frac{\theta-\varepsilon}{N} \cdot n $. Let $ 0 \leq n \leq N - 1 $ and let there be some $ \gamma_{n} $ and $ r(n) $ in the case that $ n \geq 1 $. We need to find $ \gamma_{n+1} $ and $ r(n+1) $. For practical purposes, we set $ r(0) = 0 $.

For every $ i \in \mathbb{N} $, let us put $ E_{i} = \mathrm{range} \, x_{r(n)+i} $ and let us choose $ \eta_{i} \in \Gamma $ and $ \varepsilon_{i} \in \{ -1, 1 \} $ such that $ \varepsilon_{i} x_{r(n)+i}(\eta_{i}) = |x_{r(n)+i}(\eta_{i})| > 1 - (\varepsilon/2\theta) $. By Lemma~\ref{lemmaTsB}, there are $ k \in \mathbb{N} $ and $ \alpha_{1}, \dots, \alpha_{k} \geq 0 $ with $ \sum_{i=1}^{k} \alpha_{i} = 1 $ such that
$$ \forall A \in \mathcal{M} : \sum_{1 \leq i \leq k, E_{i} \cap A \neq \emptyset} \alpha_{i} < \omega. $$
Let $ q \in \mathbb{N} $ be large enough that $ q+1 > 2k\theta/\varepsilon $, $ E_{i} \subset \{ 0, \dots, q \} $ and $ \eta_{i} \in \Gamma_{q} $ for $ i = 1, \dots, k $, and $ q+1 = m_{r(n+1)} $ for some $ r(n+1) \geq r(n)+k $. Let
$$ h_{i} = \lfloor (q+1)\alpha_{i} \rfloor, \quad i = 1, \dots, k, $$
and
\begin{align*}
\gamma_{n+1} = & \; (q+1, \{ \varepsilon_{i} \}_{i=1}^{k}, \{ E_{i} \}_{i=1}^{k}, \{ \eta_{i} \}_{i=1}^{k}, \{ h_{i} \}_{i=1}^{k}) & \quad \textrm{if } n = 0, \\
\gamma_{n+1} = & \; (q+1, \gamma_{n}, \{ \varepsilon_{i} \}_{i=1}^{k}, \{ E_{i} \}_{i=1}^{k}, \{ \eta_{i} \}_{i=1}^{k}, \{ h_{i} \}_{i=1}^{k}) & \quad \textrm{if } n \geq 1.
\end{align*}
It is easy to check that $ \gamma_{n+1} \in \Delta_{q+1} $. We have $ \mathrm{age}(\gamma_{n+1}) = n+1 $ and $ \mathrm{rank}(\gamma_{n+1}) = q+1 = m_{r(n+1)} $, and it remains to show that $ e^{*}_{\gamma_{n+1}}(\sum_{j=1}^{r(n+1)} x_{j}) > \frac{\theta-\varepsilon}{N} \cdot (n+1) $.

Let us denote $ x = \sum_{j=1}^{r(n+1)} x_{j} $. We realize first that if $ n \geq 1 $, then $ e^{*}_{\gamma_{n}}(x) > \frac{\theta-\varepsilon}{N} \cdot n $. For $ j \geq r(n) + 1 $, the range of $ x_{j} $ is disjoint from $ [0, m_{r(n)}] = [0, \mathrm{rank}(\gamma_{n})] $, and so $ e^{*}_{\gamma_{n}}(x_{j}) = 0 $. Hence, $ e^{*}_{\gamma_{n}}(x) = e^{*}_{\gamma_{n}}(\sum_{j=1}^{r(n)} x_{j}) > \frac{\theta-\varepsilon}{N} \cdot n $.

Using Claim~\ref{XMclaim2}, we obtain in both cases $ n = 0 $ and $ n \geq 1 $ that
\begin{align*}
e_{\gamma_{n+1}}^{*}(x) \geq & \; \frac{\theta-\varepsilon}{N} \cdot n + d_{\gamma_{n+1}}^{*}(x) + \frac{\theta}{N} \frac{1}{q+1} \sum_{i=1}^{k} h_{i} \varepsilon_{i} (e^{*}_{\eta_{i}} \circ P_{E_{i}})(x) \\
 = & \; \frac{\theta-\varepsilon}{N} \cdot n + 0 + \frac{\theta}{N} \frac{1}{q+1} \sum_{i=1}^{k} h_{i} \varepsilon_{i} e^{*}_{\eta_{i}} (x_{r(n)+i}) \\
 > & \; \frac{\theta-\varepsilon}{N} \cdot n + \frac{\theta}{N} \frac{1}{q+1} \sum_{i=1}^{k} h_{i} \cdot \Big( 1 - \frac{\varepsilon}{2\theta} \Big).
\end{align*}
As $ \sum_{i=1}^{k} h_{i} \geq \sum_{i=1}^{k} ((q+1)\alpha_{i} - 1) = q+1 - k > (q+1)(1 - (\varepsilon/2\theta)) $, it follows that
$$ e_{\gamma_{n+1}}^{*}(x) > \frac{\theta-\varepsilon}{N} \cdot n + \frac{\theta}{N} \frac{1}{q+1} \cdot (q+1) \Big( 1 - \frac{\varepsilon}{2\theta} \Big)^{2} > \frac{\theta-\varepsilon}{N} \cdot (n+1). $$
Therefore, the choice of $ \gamma_{n+1} $ and $ r(n+1) $ works indeed.

Finally, we get $ \Vert \sum_{j=1}^{r(N)} x_{j} \Vert \geq e^{*}_{\gamma_{N}}(\sum_{j=1}^{r(N)} x_{j}) > \frac{\theta-\varepsilon}{N} \cdot N = \theta - \varepsilon $.
\end{proof}

\begin{proposition} \label{propXMisomc0}
{\rm (1)} If $ \mathcal{M} $ contains an infinite set, then $ \mathfrak{X}_{\mathcal{M}} $ is isomorphic to $ c_{0} $.

{\rm (2)} If $ \mathcal{M} $ consists of finite sets only, then $ \mathfrak{X}_{\mathcal{M}} $ does not contain an isomorphic copy of $ c_{0} $ (cf. \cite[Proposition~5.7(iii)]{arggasmot}).
\end{proposition}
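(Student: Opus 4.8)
The plan is to prove the two parts separately, using Claim~\ref{XMclaim3} for (1) and Claim~\ref{XMclaim4} for (2), together with Rosenthal's theorem (Theorem~\ref{thmros}) and the already noted fact (following Claim~\ref{XMclaim1}) that $\mathfrak{X}_{\mathcal{M}}$ is a $\mathcal{L}_{\infty}$-space. The unifying idea is that an infinite set in $\mathcal{M}$ turns Claim~\ref{XMclaim3} into a $c_{0}$-upper estimate on a suitable blocking, whereas if $\mathcal{M}$ has only finite sets, Claim~\ref{XMclaim4} can be bootstrapped to make all partial sums of any block sequence blow up.

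For (1), I fix an infinite set $A_{0} = \{ a_{1} < a_{2} < \cdots \} \in \mathcal{M}$ and block the finite-dimensional decomposition of $\mathfrak{X}_{\mathcal{M}}$ along $A_{0}$: put $B_{0} = \{ 0, \dots, a_{1}-1 \}$ and $B_{l} = \{ a_{l}, \dots, a_{l+1}-1 \}$ for $l \geq 1$, and set $M_{l} = P_{B_{l}} \mathfrak{X}_{\mathcal{M}}$. Given $x$ in the dense span with $\Lambda := \max_{l} \Vert P_{B_{l}} x \Vert$, the vectors $u_{l} = P_{B_{l}} x / \Lambda$ have norm $\leq 1$ and $\mathrm{range} \, u_{l} \subseteq B_{l}$, so the points $a_{1}, a_{2}, \dots \in A_{0}$ satisfy $\mathrm{range} \, u_{l-1} < a_{l} \leq \mathrm{range} \, u_{l}$. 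Thus Claim~\ref{XMclaim3} applies and gives $\Vert \sum_{l=0}^{m} u_{l} \Vert \leq C$; letting $m \to \infty$ yields $\Vert x \Vert \leq C \sup_{l} \Vert P_{B_{l}} x \Vert$. Combined with $\Vert P_{B_{l}} \Vert \leq 2N/(N-2\theta)$ from Claim~\ref{XMclaim1}, the norm of $\mathfrak{X}_{\mathcal{M}}$ is equivalent to $\sup_{l} \Vert P_{B_{l}} x \Vert$, so $\mathfrak{X}_{\mathcal{M}}$ is isomorphic to the $c_{0}$-sum $(\sum_{l} M_{l})_{c_{0}}$ of finite-dimensional spaces. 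Embedding each finite-dimensional $M_{l}$ with small distortion into some $\ell_{\infty}^{k_{l}}$ realises this $c_{0}$-sum as a subspace of $c_{0}$, which has an unconditional basis. Since $\mathfrak{X}_{\mathcal{M}}$ is a $\mathcal{L}_{\infty}$-space, Theorem~\ref{thmros} then forces $\mathfrak{X}_{\mathcal{M}} \cong c_{0}$.

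For (2), I argue by contradiction. If $c_{0}$ embedded into $\mathfrak{X}_{\mathcal{M}}$, then a standard Bessaga--Pe\l czy\'nski perturbation against the natural finite-dimensional decomposition would produce a normalized block sequence $(x_{j})$, with a gap between consecutive ranges, satisfying $K := \sup_{r} \Vert \sum_{j=1}^{r} x_{j} \Vert < \infty$. A single use of Claim~\ref{XMclaim4} groups $(x_{j})$ into blocks $u_{i} = \sum_{j \in I_{i}} x_{j}$ with $\Vert u_{i} \Vert > \theta - \varepsilon$. The crucial point is that re-running the age-$N$ construction from the proof of Claim~\ref{XMclaim4} on the $u_{i}$, but now choosing each witnessing coordinate $\eta_{i}$ where $|u_{i}(\eta_{i})|$ is within $\delta$ of $\Vert u_{i} \Vert$, the very same computation produces a partial sum of norm close to $\theta \cdot (\theta - \varepsilon)$; more generally, the proof shows that blocks of norm $\geq M$ admit a partial sum of norm $> M\theta - \varepsilon$. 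Since such a partial sum is again a sum $\sum_{j} x_{j}$ over a consecutive set of original indices, its norm is at most $2K$. Iterating $k$ times gives $2K > \theta^{k} - \varepsilon_{k}$, and as $\theta > 1$ this contradicts $K < \infty$; hence $\mathfrak{X}_{\mathcal{M}}$ contains no isomorphic copy of $c_{0}$.

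The routine inputs are the perturbation to a block sequence in (2) and the passage to the limit in (1); the main obstacle is the geometric bootstrapping in (2). The delicate step is to verify that running the construction of Claim~\ref{XMclaim4} on blocks of norm about $\theta$ (rather than norm $1$) genuinely multiplies the achieved norm by a factor $\theta$ at each stage: this relies on the $\mathcal{M}$-admissibility condition $\sum_{E_{i} \cap A \neq \emptyset} h_{i} < \omega \cdot (q+1)$ surviving the regrouping of the $u_{i}$, which is exactly what Lemma~\ref{lemmaTsB} guarantees for intervals $E_{i}$ when $\mathcal{M}$ consists of finite sets, and on the choice of the coordinates $\eta_{i}$ near the norm of each block, so that the linear-in-age increment in the proof starts from $\theta M / N$ rather than from $\theta / N$.
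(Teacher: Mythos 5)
Your proposal is correct and follows essentially the same route as the paper: part (1) uses Claim~\ref{XMclaim3} (with the infinite set as admissibility witness) and Claim~\ref{XMclaim1} to identify $\mathfrak{X}_{\mathcal{M}}$ with a $c_{0}$-sum of finite-dimensional spaces and then invokes Theorem~\ref{thmros}, while part (2) bootstraps Claim~\ref{XMclaim4} to make partial sums of a skipped block sequence exceed $(\theta-\varepsilon)^{n}$ for every $n$. The only (cosmetic) difference is in (2): rather than re-running the age construction with near-norming coordinates $\eta_{i}$, the paper simply rescales each grouped block by $(\theta-\varepsilon)^{-n}$ and applies Claim~\ref{XMclaim4} verbatim, which yields your multiplicative gain by homogeneity without revisiting the proof of the claim.
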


\begin{proof}
(1) Let $ c = (N-2\theta)/2N, C = 2N(N+\theta)/(N-2\theta)(1-\theta \omega) $. If $ \{ n_{1}, n_{2}, \dots \} \in \mathcal{M} $ for an infinite increasing sequence $ n_{1} < n_{2} < \dots $, then it follows from Claim~\ref{XMclaim1} and Claim~\ref{XMclaim3} that
$$ c \cdot \sup_{k\in\mathbb{N} \cup \{ 0 \}} \Vert P_{E_{k}}u \Vert \leq \Vert u \Vert \leq C \cdot \sup_{k\in\mathbb{N} \cup \{ 0 \}} \Vert P_{E_{k}}u \Vert, \quad u \in \mathfrak{X}_{\mathcal{M}}, $$
where $ E_{0} = \{ 0, \dots, n_{1} - 1 \} $ and $ E_{k} = \{ n_{k}, \dots , n_{k+1} - 1 \} $. Hence the space $ \mathfrak{X}_{\mathcal{M}} $ is isomorphic to the $ c_{0} $-sum of a sequence of finite-dimensional spaces.

It follows that $ \mathfrak{X}_{\mathcal{M}} $ is isomorphic to a subspace of $ c_{0} $. By Theorem~\ref{thmros}, since it is a $ \mathcal{L}_{\infty} $-space at the same time, it is isomorphic to $ c_{0} $. (Let us remark that a simpler argument is provided in the proof of \cite[Corollary~1]{johzip}).

(2) Let us fix $ \varepsilon > 0 $ such that $ \theta - \varepsilon > 1 $. By a simple induction argument, we show that Claim~\ref{XMclaim4} holds in fact with the conclusion $ \Vert \sum_{j = 1}^{r} x_{j} \Vert > (\theta - \varepsilon)^{n} $. Assume that this statement holds for $ n $. If $ x_{1}, x_{2}, \dots $ is such a sequence as in Claim~\ref{XMclaim4}, then we can apply the induction hypothesis again and again to find numbers $ 0 = s_{1} < s_{2} < \dots $ such that $ \Vert \sum_{j = s_{k}+1}^{s_{k+1}} x_{j} \Vert > (\theta - \varepsilon)^{n} $ for every $ k \in \mathbb{N} $. Applying (unmodified) Claim~\ref{XMclaim4} on the sequence $ \frac{1}{(\theta - \varepsilon)^{n}} \sum_{j = s_{k}+1}^{s_{k+1}} x_{j} $, we obtain $ r \in \mathbb{N} $ such that $ \frac{1}{(\theta - \varepsilon)^{n}} \Vert \sum_{k = 1}^{r} \sum_{j = s_{k}+1}^{s_{k+1}} x_{j} \Vert > \theta - \varepsilon $. That is, $ \Vert \sum_{j = 1}^{s_{r+1}} x_{j} \Vert > (\theta - \varepsilon)^{n+1} $, which proves the statement for $ n+1 $.

Suppose that $ \mathfrak{X}_{\mathcal{M}} $ contains an isomorphic copy of $ c_{0} $. Then there is a sequence $ u_{1}, u_{2}, \dots $ in $ \mathfrak{X}_{\mathcal{M}} $ such that $ \Vert u_{i} \Vert \geq 1 $ and
$$ \mathrm{range} \, u_{1} < \mathrm{range} \, u_{2} < \dots $$
that is equivalent to the standard basis of $ c_{0} $. The sequence $ x_{j} = u_{2j} $ satisfies the assumption of Claim~\ref{XMclaim4}. Therefore, for every $ n \in \mathbb{N} $, there is $ r \in \mathbb{N} $ such that $ \Vert \sum_{j = 1}^{r} u_{2j} \Vert = \Vert \sum_{j = 1}^{r} x_{j} \Vert > (\theta - \varepsilon)^{n} $. As $ (\theta - \varepsilon)^{n} $ can be arbitrarily large, the sequence $ u_{1}, u_{2}, \dots $ can not be equivalent to the standard basis of $ c_{0} $.
\end{proof}

We introduce here one more result that will be useful later for proving Theorem~\ref{thmmain2}. First, we show that $ \mathfrak{X}_{\mathcal{M}} $ has a property called the \emph{boundedly complete skipped blocking property} in \cite{bourgain}.

\begin{claim} \label{XMclaim5}
Suppose that $ \mathcal{M} $ consists of finite sets only. Let $ a_{1}, a_{2}, \dots $ be a sequence in $ \mathfrak{X}_{\mathcal{M}} $ such that
$$ \mathrm{range} \, a_{1} < m_{1} < \mathrm{range} \, a_{2} < m_{2} < \mathrm{range} \, a_{3} < m_{3} < \dots $$
for some $ m_{1}, m_{2}, \dots $. If the sequence of partial sums $ \sum_{i=1}^{s} a_{i}, s = 1, 2, \dots, $ is bounded, then the sum $ \sum_{i=1}^{\infty} a_{i} $ is convergent.
\end{claim}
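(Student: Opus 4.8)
The plan is to prove the statement directly: assuming the partial sums $S_s = \sum_{i=1}^{s} a_i$ are bounded, I would show that $(S_s)$ is a Cauchy sequence, which yields convergence. The engine of the argument is the strengthened form of Claim~\ref{XMclaim4} established inside the proof of Proposition~\ref{propXMisomc0}(2): if $\mathcal{M}$ consists of finite sets and $\varepsilon > 0$, then any sequence $y_1, y_2, \dots$ with $\Vert y_j \Vert \geq 1$ whose ranges are separated in the sense of Claim~\ref{XMclaim4} admits, for every $n$, an index $r$ with $\Vert \sum_{j=1}^{r} y_j \Vert > (\theta - \varepsilon)^n$. Since $\theta > 1$, I would fix at the outset some $\varepsilon > 0$ with $\theta - \varepsilon > 1$, so that $(\theta - \varepsilon)^n \to \infty$ as $n \to \infty$.

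Suppose, towards a contradiction, that $\sum_{i=1}^{\infty} a_i$ does not converge. Then $(S_s)$ fails to be Cauchy, and a standard observation provides $\delta > 0$ such that for every index $N_0$ there is $q > N_0$ with $\Vert S_q - S_{N_0} \Vert \geq \delta$ (otherwise, for every $\delta > 0$ there would be an $N_0$ beyond which all $S_q$ stay within $\delta$ of $S_{N_0}$, forcing the Cauchy property). Using this, I would recursively build indices $0 = q_0 < q_1 < q_2 < \dots$ so that the consecutive blocks $b_k = \sum_{i=q_{k-1}+1}^{q_k} a_i = S_{q_k} - S_{q_{k-1}}$ satisfy $\Vert b_k \Vert \geq \delta$. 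The crucial point is that these blocks inherit the skipped-block separation: because $\mathrm{range} \, a_{q_k} < m_{q_k} < \mathrm{range} \, a_{q_k+1}$, the range of $b_k$ lies entirely below $m_{q_k}$ while the range of $b_{k+1}$ lies entirely above it, whence $\mathrm{range} \, b_1 < m_{q_1} < \mathrm{range} \, b_2 < m_{q_2} < \dots$.

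I would then apply the strengthened Claim~\ref{XMclaim4} to the normalized sequence $y_k = b_k/\delta$, which has $\Vert y_k \Vert \geq 1$ and the required range separation furnished by $m_{q_1}, m_{q_2}, \dots$. For any $n$ this produces an $r$ with $\Vert \sum_{k=1}^{r} b_k \Vert > \delta \cdot (\theta - \varepsilon)^n$. But $\sum_{k=1}^{r} b_k = S_{q_r}$ is one of the partial sums, hence bounded by $M := \sup_s \Vert S_s \Vert$; since $(\theta - \varepsilon)^n \to \infty$, this is absurd. Therefore $(S_s)$ is Cauchy and the series converges. The only step demanding care, and the one I would flag as the main obstacle, is the passage from \uv{not Cauchy} to a sequence of \emph{consecutive} blocks of norm bounded below: consecutiveness is exactly what makes each $\sum_{k=1}^{r} b_k$ a genuine partial sum of the original series (and so bounded by $M$), which is what collides with the unbounded growth supplied by Claim~\ref{XMclaim4}.
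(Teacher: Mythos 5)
Your argument is correct and is essentially the paper's own proof: both assume non-convergence, extract consecutive blocks $\sum_{i=s_k+1}^{s_{k+1}} a_i$ of norm at least $\delta$ that inherit the skipped-block separation, normalize by $\delta$, and apply the strengthened form of Claim~\ref{XMclaim4} from the proof of Proposition~\ref{propXMisomc0}(2) to make a partial sum exceed $\delta(\theta-\varepsilon)^n$, contradicting boundedness. The point you flag about consecutiveness of the blocks is handled the same way in the paper.
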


\begin{proof}
Assume that the sum $ \sum_{i=1}^{\infty} a_{i} $ is not convergent. Then there is $ \delta > 0 $ such that, for every $ s \in \mathbb{N} \cup \{ 0 \} $, there is $ r \in \mathbb{N} $ such that $ \Vert \sum_{i=s+1}^{r} a_{i} \Vert > \delta $. This allows us to find numbers $ 0 = s_{1} < s_{2} < \dots $ such that $ \Vert \sum_{i=s_{k}+1}^{s_{k+1}} a_{i} \Vert > \delta $ for every $ k \in \mathbb{N} $. The sequence
$$ x_{k} = \frac{1}{\delta} \sum_{i=s_{k}+1}^{s_{k+1}} a_{i}, \quad k = 1, 2, \dots, $$
satisfies the assumption of Claim~\ref{XMclaim4}. By the argument from the proof of Proposition~\ref{propXMisomc0}(2), for every $ n \in \mathbb{N} $, there is $ r \in \mathbb{N} $ such that $ \Vert \sum_{k = 1}^{r} x_{k} \Vert > (\theta - \varepsilon)^{n} $. That is, $ \Vert \sum_{i = 1}^{s_{r+1}} a_{i} \Vert > \delta \cdot (\theta - \varepsilon)^{n} $. For this reason, the sequence of partial sums $ \sum_{i=1}^{s} a_{i}, s = 1, 2, \dots, $ is not bounded.
\end{proof}

\begin{proposition}[{cf. \cite[Remark~5.9]{arggasmot}}] \label{propXMreflsatu}
If $ \mathcal{M} $ consists of finite sets only and the dual space $ \mathfrak{X}_{\mathcal{M}}^{*} $ is separable, then every infinite-dimensional subspace of $ \mathfrak{X}_{\mathcal{M}} $ contains an infinite-dimensional reflexive subspace.
\end{proposition}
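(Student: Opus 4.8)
The plan is to exhibit, inside an arbitrary infinite-dimensional subspace $ Y \subseteq \mathfrak{X}_{\mathcal{M}} $, a basic sequence that is at once \emph{boundedly complete} and \emph{shrinking}; by James's theorem its closed linear span is then reflexive. I would work throughout with the natural finite-dimensional decomposition $ (M_p) $ of $ \mathfrak{X}_{\mathcal{M}} $, where $ M_p = \mathrm{span}\{ d_\gamma : \gamma \in \Delta_p \} $, whose associated projections $ P_{(p, s]} $ are uniformly bounded by Claim~\ref{XMclaim1}. The two halves of the James condition come from two different sources: bounded completeness from the skipped-blocking estimate of Claim~\ref{XMclaim5}, and shrinkingness from the hypothesis that $ \mathfrak{X}_{\mathcal{M}}^{*} $ is separable.

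First I would run a standard gliding-hump argument against $ (M_p) $ to replace $ Y $, up to isomorphism, by the closed span $ V = [u_k] $ of a normalized skipped block sequence $ (u_k) $, i.e.\ one with $ \mathrm{range}\, u_1 < m_1 < \mathrm{range}\, u_2 < m_2 < \cdots $. Claim~\ref{XMclaim5} then shows at once that $ (u_k) $ is boundedly complete: if $ \sup_n \| \sum_{k=1}^n a_k u_k \| < \infty $, then $ (a_k u_k) $ is again a skipped block sequence with bounded partial sums, so the series converges. The same reasoning shows that \emph{every} block basis of $ (u_k) $ is boundedly complete, which is the property I will preserve below. Now $ V^{*} $ is separable, being a quotient of $ \mathfrak{X}_{\mathcal{M}}^{*} $; invoking the structure theory of spaces with separable dual (the Szlenk-index / Johnson--Rosenthal circle of ideas), I would pass to an infinite-dimensional subspace $ Z \subseteq V $ carrying a shrinking basis $ (e_n) $. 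Choosing a block basis $ (z_l) $ of $ (e_n) $ --- automatically shrinking, since block bases of shrinking bases are shrinking --- and then gliding-humping $ (z_l) $ against $ (u_k) $, I obtain a block basis $ (w_l) $ of $ (u_k) $ with $ \| w_l - z_l \| $ summably small. By the small-perturbation principle $ (w_l) $ is equivalent to $ (z_l) $, hence shrinking; and being a block basis of $ (u_k) $ it is boundedly complete. Thus $ (w_l) $ is both shrinking and boundedly complete, so $ [w_l] $, which embeds into $ Y $, is reflexive.

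The main obstacle is the production of the shrinking basis $ (e_n) $ in the second step: Claim~\ref{XMclaim5} is inherently one-sided, yielding only bounded completeness, so shrinkingness must be imported from the separability of the dual, and this is where that hypothesis is genuinely used. (That bounded completeness alone cannot suffice is shown by the James space, which has a boundedly complete basis and separable bidual yet is not reflexive.) What makes the extraction safe is that $ \mathfrak{X}_{\mathcal{M}} $ contains no copy of $ c_0 $ by Proposition~\ref{propXMisomc0}(2) --- equivalently, a separable dual space contains no copy of $ c_0 $ --- so the degenerate obstruction of a block sequence equivalent to the summing basis of $ c_0 $ cannot arise. The cleanest packaging of the whole argument is as an instance of Bourgain's theorem on the boundedly complete skipped blocking property \cite{bourgain}: Claim~\ref{XMclaim5} verifies that property for $ \mathfrak{X}_{\mathcal{M}} $, the dual is separable by assumption, and reflexive saturation follows. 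I would nonetheless spell out the skipped-block extraction above, since the delicate point is precisely the simultaneous realization of the two James conditions on a single block subspace of $ (u_k) $.
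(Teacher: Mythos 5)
Your argument is correct and is essentially the paper's: the proof there consists precisely of observing that Claim~\ref{XMclaim5} gives the boundedly complete skipped blocking property and then citing \cite[Corollary~2.6(1)]{bourgain}, which is the packaging you name at the end. The additional material in your write-up is in effect an inline proof of that corollary of Bourgain (skipped blocks for bounded completeness, Johnson--Rosenthal plus a perturbation for shrinkingness, then James), and it goes through; only the parenthetical \uv{equivalently, a separable dual space contains no copy of $c_{0}$} is a stray and inessential remark, since $\mathfrak{X}_{\mathcal{M}}$ is not assumed to be a dual space and the no-$c_{0}$ fact you actually use comes from Proposition~\ref{propXMisomc0}(2).
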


\begin{proof}
This follows from Claim~\ref{XMclaim5} and \cite[Corollary~2.6(1)]{bourgain}.
\end{proof}

\section{Conclusion}

As well as in the previous section, we fix $ N \geq 3, 1 < \theta < N/2 $ and $ 0 < \omega < 1/\theta $. For every $ \mathcal{M} \in K(\mathcal{P}(\mathbb{N})) $, we consider the space $ \mathfrak{X}_{\mathcal{M}} $ constructed for $ N, \theta, \omega $ and $ \mathcal{M} $. What remains to show is that the mapping $ \mathcal{M} \mapsto \mathfrak{X}_{\mathcal{M}} $ is measurable.

\begin{lemma} \label{lemmselectXM}
There exists a Borel mapping $ \mathfrak{S} : K(\mathcal{P}(\mathbb{N})) \to \mathcal{SE}(C([0, 1])) $ such that $ \mathfrak{S}(\mathcal{M}) $ is isometric to $ \mathfrak{X}_{\mathcal{M}} $ for every $ \mathcal{M} \in K(\mathcal{P}(\mathbb{N})) $.
\end{lemma}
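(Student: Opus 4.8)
The plan is to apply the selection lemma, Lemma~\ref{lemmselect}, with $ \Xi = K(\mathcal{P}(\mathbb{N})) $ and $ X_{\mathcal{M}} = \mathfrak{X}_{\mathcal{M}} $. The obstacle to a direct application is that the natural dense system $ \{ d_{\gamma} : \gamma \in \Gamma \} $ is indexed by $ \Gamma = \Gamma(\mathcal{M}) $, which \emph{itself} depends on $ \mathcal{M} $, whereas Lemma~\ref{lemmselect} requires a single sequence $ x^{\mathcal{M}}_{1}, x^{\mathcal{M}}_{2}, \dots $ indexed by a fixed $ \mathbb{N} $. I would remove this dependence by forming a universal index set. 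Define $ \widehat{\Delta}_{q+1} $ to be the collection of all tuples of the forms (a) and (b) from Section~\ref{sec:bourdelb} that obey every requirement \emph{except} the $ \mathcal{M} $-dependent admissibility inequality, with $ \eta_{i}, \xi $ ranging over the universal sets $ \widehat{\Gamma}_{q}, \widehat{\Gamma}_{q-1} $ constructed so far. This yields a fixed countable set $ \widehat{\Gamma} = \bigcup_{q} \widehat{\Delta}_{q} $, which I enumerate as $ \gamma_{1}, \gamma_{2}, \dots $. For a given $ \mathcal{M} $, the genuine index set $ \Gamma(\mathcal{M}) $ is precisely the set of $ \gamma \in \widehat{\Gamma} $ that are \emph{hereditarily $ \mathcal{M} $-admissible}: both $ \gamma $ and every element it references (its $ \eta_{i} $, and for form (b) its $ \xi $, recursively) satisfies the admissibility inequality relative to $ \mathcal{M} $. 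I then set $ x^{\mathcal{M}}_{j} = d_{\gamma_{j}} $ when $ \gamma_{j} \in \Gamma(\mathcal{M}) $ and $ x^{\mathcal{M}}_{j} = 0 $ otherwise, so that $ \mathrm{span}\{ x^{\mathcal{M}}_{j} \} = \mathrm{span}\{ d_{\gamma} : \gamma \in \Gamma(\mathcal{M}) \} $ is dense in $ \mathfrak{X}_{\mathcal{M}} $.

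The first key point is that $ \{ \mathcal{M} : \gamma \in \Gamma(\mathcal{M}) \} $ is Borel for each fixed $ \gamma \in \widehat{\Gamma} $. For a tuple of rank $ q+1 $, the admissibility inequality is equivalent to $ \mathcal{M} \cap U_{\gamma} = \emptyset $, where $ U_{\gamma} = \{ A \in \mathcal{P}(\mathbb{N}) : \sum_{E_{i} \cap A \neq \emptyset} h_{i} \geq \omega \cdot (q+1) \} $. Since all the $ E_{i} $ lie in $ \{ 0, \dots, q \} $, the sum depends only on $ A \cap \{ 0, \dots, q \} $, so $ U_{\gamma} $ is clopen in the Cantor space $ \mathcal{P}(\mathbb{N}) $. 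Hence $ \{ \mathcal{M} : \mathcal{M} \cap U_{\gamma} = \emptyset \} = \{ \mathcal{M} : \mathcal{M} \subseteq \mathcal{P}(\mathbb{N}) \setminus U_{\gamma} \} $ is open (indeed clopen) in the Vietoris topology. This is exactly where compactness of $ \mathcal{M} $ matters: it collapses the quantifier \uv{$ \forall A \in \mathcal{M} $} into the clopen condition $ \mathcal{M} \cap U_{\gamma} = \emptyset $. Because $ \gamma \in \Gamma(\mathcal{M}) $ is the conjunction of this condition over $ \gamma $ and its finitely many ancestors (rank strictly decreases, and each tuple references only finitely many elements), the set $ \{ \mathcal{M} : \gamma \in \Gamma(\mathcal{M}) \} $ is a finite intersection of clopen sets, hence Borel.

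The second key point is that the individual coordinate values are $ \mathcal{M} $-independent. Each $ d_{\gamma_{k}}(\gamma) $ is obtained from the functionals $ c^{*}_{\gamma} $ through the recursion defining $ i_{p,q} $, and by Claim~\ref{XMclaim2} these functionals are determined solely by the combinatorial data stored in the tuples. Thus, whenever $ \gamma_{k}, \gamma \in \Gamma(\mathcal{M}) $, hereditary admissibility ensures that every coordinate touched by the recursion already lies in $ \Gamma(\mathcal{M}) $, so restriction to $ \ell_{\infty}(\Gamma(\mathcal{M})) $ changes nothing and the value $ v(\gamma_{k}, \gamma) := d_{\gamma_{k}}(\gamma) $ coincides with the value computed in the universal space and is in particular a fixed real number. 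Since $ \mathfrak{X}_{\mathcal{M}} $ carries the supremum norm inherited from $ \ell_{\infty}(\Gamma(\mathcal{M})) $, I obtain
$$ \Big\Vert \sum_{k=1}^{n} \lambda_{k} x^{\mathcal{M}}_{k} \Big\Vert_{\mathfrak{X}_{\mathcal{M}}} = \sup_{\gamma \in \widehat{\Gamma}} \mathbf{1}[\gamma \in \Gamma(\mathcal{M})] \cdot \Big| \sum_{k=1}^{n} \lambda_{k} \, \mathbf{1}[\gamma_{k} \in \Gamma(\mathcal{M})] \, v(\gamma_{k}, \gamma) \Big|. $$
Every indicator on the right is Borel in $ \mathcal{M} $ by the previous paragraph and each $ v(\gamma_{k}, \gamma) $ is constant, so the right-hand side is a countable supremum of Borel functions and therefore Borel. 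Consequently the hypotheses of Lemma~\ref{lemmselect} are met, and it delivers the required Borel mapping $ \mathfrak{S} $ with $ \mathfrak{S}(\mathcal{M}) $ isometric to $ \mathfrak{X}_{\mathcal{M}} $. The main work is thus concentrated in the two measurability facts above, with the compactness-driven clopenness of membership being the conceptual heart of the argument.
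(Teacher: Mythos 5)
Your argument is correct and reduces to Lemma~\ref{lemmselect} just as the paper does, but the measurability step is organized along a genuinely different route. The paper keeps the index set $\Gamma^{\mathcal{M}}$ honestly $\mathcal{M}$-dependent: it proves by induction that all data of rank at most $q+1$ (the sets $\Delta_{q+1}^{\mathcal{M}}$, the functionals $c_{\gamma}^{*\mathcal{M}}$, the operators $i_{p,q+1}^{\mathcal{M}}$) depend only on the finite trace $\{ A \cap \{0,\dots,q\} : A \in \mathcal{M} \}$, chooses the enumerations of $\Delta_{q+1}^{\mathcal{M}}$ consistently for $\mathcal{M}$'s sharing that trace, and then writes $\Vert \sum_{k} \lambda_{k} x_{k}^{\mathcal{M}} \Vert$ as a pointwise limit in $q$ of functions that are constant on each piece of a finite clopen partition of $K(\mathcal{P}(\mathbb{N}))$, hence continuous, hence Borel. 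You instead pass to a universal countable index set $\widehat{\Gamma}$, identify $\Gamma(\mathcal{M})$ via hereditary admissibility, pad with zeros, and express the norm as a countable supremum of Borel functions, the Borelness of each term coming from the clopenness of $\{ \mathcal{M} : \mathcal{M} \cap U_{\gamma} = \emptyset \}$. Both proofs ultimately rest on the same compactness observation — the quantifier over $A \in \mathcal{M}$ collapses to a clopen Vietoris condition because the admissibility inequality depends only on $A \cap \{0,\dots,q\}$ — so the conceptual core is shared. The price of your version is the extra claim that $v(\gamma_{k},\gamma) = d_{\gamma_{k}}(\gamma)$ is $\mathcal{M}$-independent: the citation of Claim~\ref{XMclaim2} is not really the right reference (that claim describes $e_{\gamma}^{*}$ on $\mathfrak{X}_{\mathcal{M}}$, not the recursion defining $i_{p,q}$), and the honest justification is an induction on $\mathrm{rank}(\gamma)$ showing that the recursion computing $i_{p,q}(e_{\gamma_{k}})(\gamma)$ only ever evaluates coordinates at elements of the hereditary closure of $\gamma$, which therefore all lie in $\Gamma(\mathcal{M})$. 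This verification is routine and of the same nature as the inductive observation the paper itself leaves to the reader, so I do not regard it as a gap; what your route buys in exchange is that it dispenses with the consistent-enumeration bookkeeping and produces an explicit closed formula for the norm.
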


\begin{proof}
For $ \mathcal{M} \in K(\mathcal{P}(\mathbb{N})) $, let us denote the objects from Section~\ref{sec:bourdelb} by $ \Delta_{p}^{\mathcal{M}}, \Gamma_{p}^{\mathcal{M}}, c_{\gamma}^{*\mathcal{M}} $, etc. For every $ \mathcal{M} \in K(\mathcal{P}(\mathbb{N})) $, it holds that $ \Delta_{0}^{\mathcal{M}} = \Gamma_{0}^{\mathcal{M}} = \{ 0 \} $ and $ i_{0, 0}^{\mathcal{M}} $ and $ P_{\{ 0 \}}^{0, \mathcal{M}} $ are the identity on $ \ell_{\infty}(\{ 0 \}) $. We can easily prove by induction on $ q \in \mathbb{N} \cup \{ 0 \} $ the following observation:

\emph{If $ \mathcal{M}_{1}, \mathcal{M}_{2} \in K(\mathcal{P}(\mathbb{N})) $ are two compact systems such that
$$ \big\{ A \cap \{ 0, \dots, q \} : A \in \mathcal{M}_{1} \big\} = \big\{ A \cap \{ 0, \dots, q \} : A \in \mathcal{M}_{2} \big\}, $$
then we have $ \Delta_{q+1}^{\mathcal{M}_{1}} = \Delta_{q+1}^{\mathcal{M}_{2}}, \Gamma_{q+1}^{\mathcal{M}_{1}} = \Gamma_{q+1}^{\mathcal{M}_{2}} $, $ \mathrm{age}^{\mathcal{M}_{1}}(\gamma) = \mathrm{age}^{\mathcal{M}_{2}}(\gamma) $ and $ c_{\gamma}^{*\mathcal{M}_{1}} = c_{\gamma}^{*\mathcal{M}_{2}} $ for $ \gamma \in \Delta_{q+1}^{\mathcal{M}_{1}} $, $ i_{p,q+1}^{\mathcal{M}_{1}} = i_{p,q+1}^{\mathcal{M}_{2}} $ for $ 0 \leq p \leq q+1 $ and $ P_{E}^{q+1, \mathcal{M}_{1}} = P_{E}^{q+1, \mathcal{M}_{2}} $ for an interval $ E $ of $ \{ 0, 1, \dots, q+1 \} $.}

Let $ \gamma_{1}^{\mathcal{M}} = 0 $ (i.e., $ \Gamma_{0}^{\mathcal{M}} = \{ \gamma_{1}^{\mathcal{M}} \} $) for all $ \mathcal{M} \in K(\mathcal{P}(\mathbb{N})) $. By a recursive procedure, for every $ q \in \mathbb{N} \cup \{ 0 \} $, we enumerate sets $ \Gamma_{q+1}^{\mathcal{M}} $ in the way that we choose the same enumeration
$$ \Delta_{q+1}^{\mathcal{M}} = \big\{ \gamma_{|\Gamma_{q}^{\mathcal{M}}|+1}^{\mathcal{M}}, \gamma_{|\Gamma_{q}^{\mathcal{M}}|+2}^{\mathcal{M}}, \dots, \gamma_{|\Gamma_{q+1}^{\mathcal{M}}|}^{\mathcal{M}} \big\} $$
for $ \mathcal{M} $'s which have the same set $ \{ A \cap \{ 0, \dots, q \} : A \in \mathcal{M} \} $. We claim that Lemma~\ref{lemmselect} can be applied on $ \Xi = K(\mathcal{P}(\mathbb{N})) $,
$$ X_{\mathcal{M}} = \mathfrak{X}_{\mathcal{M}} \quad \textrm{and} \quad x^{\mathcal{M}}_{k} = d_{\gamma_{k}^{\mathcal{M}}}^{\mathcal{M}}, \; k \in \mathbb{N}. $$
Let $ n \in \mathbb{N} $ and $ \lambda_{1}, \dots, \lambda_{n} \in \mathbb{R} $ be given. Clearly, $ \gamma_{k}^{\mathcal{M}} \in \Gamma_{n}^{\mathcal{M}} $ for $ 1 \leq k \leq n $, as $ \Gamma_{n}^{\mathcal{M}} $ has at least $ n $ elements. We have
$$ \sum_{k=1}^{n} \lambda_{k} x^{\mathcal{M}}_{k} = P_{[0, n]}^{\mathcal{M}} \Big( \sum_{k=1}^{n} \lambda_{k} x^{\mathcal{M}}_{k} \Big) = (i_{n}^{\mathcal{M}} \circ r_{n}^{\mathcal{M}}) \Big( \sum_{k=1}^{n} \lambda_{k} x^{\mathcal{M}}_{k} \Big), $$
and so
$$ \Big\Vert \sum_{k=1}^{n} \lambda_{k} x^{\mathcal{M}}_{k} \Big\Vert = \lim_{q \to \infty} \Big\Vert (i_{n, q+1}^{\mathcal{M}} \circ r_{n}^{\mathcal{M}}) \Big( \sum_{k=1}^{n} \lambda_{k} x^{\mathcal{M}}_{k} \Big) \Big\Vert. $$
It remains to realize that $ \mathcal{M} \mapsto \Vert (i_{n, q+1}^{\mathcal{M}} \circ r_{n}^{\mathcal{M}}) ( \sum_{k=1}^{n} \lambda_{k} x^{\mathcal{M}}_{k} ) \Vert $ is a Borel function for every $ q \geq n-1 $. This function is continuous in fact. Indeed, due to the above observation and the method of our enumeration, the norm $ \Vert (i_{n, q+1}^{\mathcal{M}} \circ r_{n}^{\mathcal{M}}) ( \sum_{k=1}^{n} \lambda_{k} x^{\mathcal{M}}_{k} ) \Vert $ depends only on $ \{ A \cap \{ 0, \dots, q \} : A \in \mathcal{M} \} $. For this reason, $ K(\mathcal{P}(\mathbb{N})) $ can be decomposed into finitely many clopen sets on which this norm is constant.
\end{proof}

We are going to prove the complexity results proclaimed in the introduction. The following classical result (see e.g. \cite[(27.4)]{kechris}) is behind all of them.

\begin{theorem}[Hurewicz] \label{thmhur}
The set
$$ \mathfrak{H} = \Big\{ \mathcal{M} \in K(\mathcal{P}(\mathbb{N})) : \textnormal{$ \mathcal{M} $ contains an infinite set} \Big\} $$
is a complete analytic subset of $ K(\mathcal{P}(\mathbb{N})) $. In particular, it is not Borel.
\end{theorem}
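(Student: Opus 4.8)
The plan is to prove the two halves of \uv{complete analytic} separately: first that $\mathfrak{H}$ is analytic, and then that it is hard analytic by producing a Borel reduction from a set already known to be complete analytic, namely the set of ill-founded trees on $\mathbb{N}$. The final \uv{not Borel} clause is then automatic, since a Borel hard analytic set would force every analytic set to be Borel, contradicting the existence of analytic non-Borel sets.

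For analyticity I would realize $\mathfrak{H}$ as a projection. Consider the relation
$$ R = \big\{ (\mathcal{M}, A) \in K(\mathcal{P}(\mathbb{N})) \times \mathcal{P}(\mathbb{N}) : A \in \mathcal{M} \textrm{ and } A \textrm{ is infinite} \big\}. $$
The membership relation $\{ (\mathcal{M}, A) : A \in \mathcal{M} \}$ is closed in $K(\mathcal{P}(\mathbb{N})) \times \mathcal{P}(\mathbb{N})$ (as $\mathcal{P}(\mathbb{N})$ is compact metrizable), while the set of infinite subsets of $\mathbb{N}$ is $G_{\delta}$ in $\mathcal{P}(\mathbb{N})$ (its complement, the finite sets, being $F_{\sigma}$). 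Hence $R$ is Borel, and $\mathfrak{H}$ is its projection onto the first coordinate, so $\mathfrak{H}$ is analytic.

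For hardness, fix a bijection $\sigma : \mathbb{N}^{<\mathbb{N}} \to \mathbb{N}$ and let $\mathrm{Tr}$ be the Polish space of trees on $\mathbb{N}$. To each node $t$ I associate the finite set $c(t) = \{ \sigma(s) : s \sqsubseteq t \}$ of codes of its initial segments, and to each tree $T$ the compact set
$$ \mathcal{M}_{T} = \overline{\{ c(t) : t \in T \}} \subseteq \mathcal{P}(\mathbb{N}), $$
the closure taken in the Cantor space $\mathcal{P}(\mathbb{N})$; being closed in a compact space, $\mathcal{M}_{T} \in K(\mathcal{P}(\mathbb{N}))$. The key equivalence to establish is that $\mathcal{M}_{T} \in \mathfrak{H}$ if and only if $T$ has an infinite branch. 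One direction is immediate: if $x \in [T]$, then $c(x|_{n}) \to \{ \sigma(x|_{k}) : k \in \mathbb{N} \}$, an infinite member of $\mathcal{M}_{T}$. For the converse I would analyze an arbitrary infinite $A \in \mathcal{M}_{T}$: writing $A = \lim_{n} c(t_{n})$, one checks that $A = \{ \sigma(s) : s \sqsubseteq t_{n} \textrm{ for all large } n \}$, so that $\{ s : \sigma(s) \in A \}$ is an infinite, downward-closed chain of nodes of $T$, hence the initial-segment set of a genuine branch $x \in [T]$. It then remains to verify that $T \mapsto \mathcal{M}_{T}$ is continuous (hence Borel): the trace of $\mathcal{M}_{T}$ on the first $L$ coordinates is governed only by $T \cap \{ s : \sigma(s) \leq L \}$, because the trace of $c(t)$ is already realized by the longest initial segment of $t$ of code $\leq L$, which lies in $T$. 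With the equivalence and Borelness in hand, Lemma~\ref{lemmhardset}, applied with the complete analytic set of ill-founded trees as the set $C$, yields that $\mathfrak{H}$ is hard analytic, completing the proof.

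The main obstacle I anticipate is the converse direction of the equivalence: extracting an actual infinite branch from an abstract infinite limit point of $\{ c(t) : t \in T \}$. The subtlety is that a priori such an $A$ is only a limit of code-sets of possibly unrelated finite nodes, and one must argue that the coordinates stabilizing to $1$ correspond precisely to a linearly ordered, downward-closed family of nodes; this is what forces a branch rather than merely an infinite antichain or a spurious limit set. By comparison the continuity of the reduction is routine, though it still relies on the observation that deep nodes contribute nothing new to low-coordinate traces.
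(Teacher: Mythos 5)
Your proof is correct, but it is worth noting that the paper does not prove this theorem at all: it is quoted as a classical result of Hurewicz with a pointer to \cite[(27.4)]{kechris}, where it appears in the form that $\{ K \in K(X) : K \subseteq Q \}$ is $\Pi^1_1$-complete for $Q$ a countable dense subset of a perfect Polish space $X$ (here $X = \mathcal{P}(\mathbb{N})$ and $Q$ the finite sets, so $\mathfrak{H}$ is the complement of a $\Pi^1_1$-complete set and hence $\Sigma^1_1$-complete). Your argument is a self-contained direct reduction from the ill-founded trees, and all the essential points check out: the relation $R$ is indeed Borel (membership is closed, infiniteness is $G_\delta$), and the key converse step survives scrutiny --- if $A = \lim_n c(t_n)$ is infinite, then $m \in A$ iff $m \in c(t_n)$ eventually, so any two nodes $s, s'$ with codes in $A$ are initial segments of a common $t_n$ for large $n$ and hence comparable, and downward closure is inherited from the downward closure of each $c(t_n)$; an infinite downward-closed chain contained in $T$ has exactly one node of each length and so is the initial-segment set of a branch. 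The continuity argument is also sound, since the trace of $c(t)$ on coordinates $\leq L$ equals the trace of $c(s^{*})$ for $s^{*}$ the longest initial segment of $t$ with $\sigma(s^{*}) \leq L$, and $s^{*} \in T$. What your approach buys is independence from the Hurewicz dichotomy machinery, at the cost of invoking the $\Sigma^1_1$-completeness of ill-foundedness (the normal form theorem); if you write this up, state explicitly that you are using that black box, and perhaps record the one-line observation that $\mathfrak{H}$ is the complement of $\{ \mathcal{M} : \mathcal{M} \subseteq \mathrm{Fin} \}$, which makes the link to the cited form of Hurewicz's theorem transparent.
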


\begin{proof}[Proof of Theorem \ref{thmmain}]
It is easy to show that the class of all Banach spaces $ X $ isomorphic to $ c_{0} $ (shortly $ X \simeq c_{0} $) is analytic (see \cite[Theorem~2.3]{bossard}). Let us show that it is hard analytic. Let $ \mathfrak{S} $ be a mapping provided by Lemma~\ref{lemmselectXM}. By Proposition~\ref{propXMisomc0}, we have
$$ \mathfrak{S}(\mathcal{M}) \simeq c_{0} \quad \Leftrightarrow \quad \textnormal{$ \mathcal{M} $ contains an infinite set}. $$
Now, it is sufficient to use Theorem~\ref{thmhur} and Lemma~\ref{lemmhardset}.
\end{proof}

\begin{proof}[Proof of Theorem \ref{thmmain2}]
We check first that
$$ \mathfrak{X}_{\mathcal{M}} \oplus X \simeq c_{0} \oplus X \quad \Leftrightarrow \quad \textnormal{$ \mathcal{M} $ contains an infinite set,} $$
provided that $ \mathfrak{X}_{\mathcal{M}}^{*} $ is separable (we need to apply Proposition~\ref{propXMreflsatu} in the case that $ X $ satisfies the second condition).

The implication \uv{$ \Leftarrow $} follows simply from Proposition~\ref{propXMisomc0}(1). Concerning the implication \uv{$ \Rightarrow $}, we need to consider the conditions for $ X $ separately. Let us assume that $ \mathcal{M} $ consists of finite sets only, and so that $ \mathfrak{X}_{\mathcal{M}} $ does not contain a subspace isomorphic to $ c_{0} $ by Proposition~\ref{propXMisomc0}(2). If $ X $ does not contain a subspace isomorphic to $ c_{0} $, then $ \mathfrak{X}_{\mathcal{M}} \oplus X $ has the same property by Fact~\ref{factrams}(1), and thus $ \mathfrak{X}_{\mathcal{M}} \oplus X $ is not isomorphic to $ c_{0} \oplus X $. If $ X $ does not contain an infinite-dimensional reflexive subspace, then $ c_{0} \oplus X $ has the same property by Fact~\ref{factrams}(2), and thus $ c_{0} \oplus X $ is not isomorphic to $ \mathfrak{X}_{\mathcal{M}} \oplus X $, as $ \mathfrak{X}_{\mathcal{M}} $ has an infinite-dimensional subspace by Proposition~\ref{propXMreflsatu}. If $ X $ is a subspace of a space with an unconditional basis, then $ c_{0} \oplus X $ has the same property, and thus $ c_{0} \oplus X $ is not isomorphic to $ \mathfrak{X}_{\mathcal{M}} \oplus X $, as $ \mathfrak{X}_{\mathcal{M}} $ is not isomorphic to a subspace of a space with an unconditional basis (otherwise it would be isomorphic to $ c_{0} $ by Theorem~\ref{thmros}).

We realize that there is a Borel subset $ \mathfrak{B} $ of $ K(\mathcal{P}(\mathbb{N})) $ containing the analytic non-Borel set $ \mathfrak{H} $ from Theorem~\ref{thmhur} such that $ \mathfrak{X}_{\mathcal{M}}^{*} $ is separable for every $ \mathcal{M} \in \mathfrak{B} $. It is sufficient to apply the Lusin separation theorem (see e.g. \cite[(14.7)]{kechris}), as $ \mathfrak{H} $ is disjoint from $ \{ \mathcal{M} \in K(\mathcal{P}(\mathbb{N})) : \mathfrak{X}_{\mathcal{M}}^{*} \textrm{ is not separable} \} $ that is analytic by Lemma~\ref{lemmselectXM} and \cite[Corollary~3.3]{bossard}. (In fact, concrete Borel subsets are available, for instance $ \mathfrak{B} = \{ \mathcal{M} \in K(\mathcal{P}(\mathbb{N})) : \mathrm{Sz}(\mathfrak{X}_{\mathcal{M}}) \leq \omega_{0} \} $).

It is easy to find a Borel mapping $ \mathfrak{T} : K(\mathcal{P}(\mathbb{N})) \to \mathcal{SE}(C([0, 1])) $ such that $ \mathfrak{T}(\mathcal{M}) $ is isomorphic to $ \mathfrak{X}_{\mathcal{M}} \oplus X $ for every $ \mathcal{M} \in K(\mathcal{P}(\mathbb{N})) $ (if $ I : C([0, 1]) \oplus X \to C([0, 1]) $ is an isomorphism, we can put $ \mathfrak{T}(\mathcal{M}) = I(\mathfrak{S} (\mathcal{M}) \oplus X) $, where $ \mathfrak{S} $ is a mapping given by Lemma~\ref{lemmselectXM}). We have
$$ \mathfrak{T}(\mathcal{M}) \simeq c_{0} \oplus X \quad \Leftrightarrow \quad \textnormal{$ \mathcal{M} $ contains an infinite set} $$
for every $ \mathcal{M} \in \mathfrak{B} $. Hence, it follows finally from Theorem~\ref{thmhur} that the isomorphism class of $ c_{0} \oplus X $ is not Borel.

If $ X $ does not contain an infinite-dimensional reflexive subspace or $ X $ is a subspace of a space with an unconditional basis, then the same proof works for the class of all Banach spaces $ Y $ that are isomorphic to a subspace of $ c_{0} \oplus X $ (shortly $ Y \hookrightarrow c_{0} \oplus X $). Indeed, it follows easily from the above arguments that
$$ \mathfrak{X}_{\mathcal{M}} \oplus X \hookrightarrow c_{0} \oplus X \quad \Leftrightarrow \quad \textnormal{$ \mathcal{M} $ contains an infinite set,} $$
provided that $ \mathfrak{X}_{\mathcal{M}}^{*} $ is separable.

Thus, there is the only remaining case that $ X $ does not contain an isomorphic copy of $ c_{0} $. Let us assume that the class of all Banach spaces isomorphic to a subspace of $ c_{0} \oplus X $ is Borel. Then, by Lemma~\ref{lemmselectXM},
$$ \mathfrak{C} = \Big\{ \mathcal{M} \in K(\mathcal{P}(\mathbb{N})) : \mathfrak{X}_{\mathcal{M}} \hookrightarrow c_{0} \oplus X \Big\} $$
is a Borel subset of $ K(\mathcal{P}(\mathbb{N})) $. We have $ \mathfrak{H} \subset \mathfrak{C} $ by Proposition~\ref{propXMisomc0}(1). Let us further consider
$$ \mathfrak{D} = \Big\{ \mathcal{M} \in \mathfrak{C} : \exists Z, \mathrm{dim} \, Z = \infty, Z \hookrightarrow \mathfrak{X}_{\mathcal{M}}, Z \hookrightarrow X \Big\}. $$
Using Lemma~\ref{lemmselectXM} and \cite[Theorem~2.3(ii)]{bossard}, it is straightforward to check that $ \mathfrak{D} $ is analytic. Let us show that
$$ \mathfrak{C} \setminus \mathfrak{H} \subset \mathfrak{D}. $$
Let $ \mathcal{M} \in \mathfrak{C} \setminus \mathfrak{H} $. By Lemma~\ref{lemmrams}, $ \mathfrak{X}_{\mathcal{M}} $ has an infinite-dimensional subspace $ Z $ such that $ Z \hookrightarrow c_{0} $ or $ Z \hookrightarrow X $. Since every infinite-dimensional subspace of $ c_{0} $ contains an isomorphic copy of $ c_{0} $, the possibility $ Z \hookrightarrow c_{0} $ (which would imply $ c_{0} \hookrightarrow Z \subset \mathfrak{X}_{\mathcal{M}} $) is excluded by Proposition~\ref{propXMisomc0}(2). Thus, $ Z $ witnesses that $ \mathcal{M} \in \mathfrak{D} $.

By Theorem~\ref{thmhur}, the set $ \mathfrak{C} \setminus \mathfrak{H} $ is not analytic (by the Lusin separation theorem, $ \mathfrak{H} $ would be Borel in the opposite case). For this reason, the inclusion $ \mathfrak{C} \setminus \mathfrak{H} \subset \mathfrak{D} $ is proper. Pick some $ \mathcal{M} \in \mathfrak{H} \cap \mathfrak{D} $. There is an infinite-dimensional Banach space $ Z $ such that $ Z \hookrightarrow \mathfrak{X}_{\mathcal{M}} $ and $ Z \hookrightarrow X $. Since $ \mathfrak{X}_{\mathcal{M}} \simeq c_{0} $ and every infinite-dimensional subspace of $ c_{0} $ contains an isomorphic copy of $ c_{0} $, we have $ c_{0} \hookrightarrow Z \hookrightarrow X $. This contradicts the assumption on $ X $.

(Let us remark that the last part of the proof does not require the knowledge of the spaces $ \mathfrak{X}_{\mathcal{M}} $, as much simpler spaces $ T^{*}[\mathcal{M}, \frac{1}{2}] $ studied e.g. in \cite[Section~3]{kurka} can be used in the definition of $ \mathfrak{C} $ as well).
\end{proof}

\begin{proof}[Proof of Theorem \ref{thmmain3}]
It was shown in \cite[5.7]{braga}, \cite[p.~367]{godefroyprobl} and \cite[p.~368]{godefroyprobl} that the classes are analytic. Let us show that they are not Borel. We have already seen in the proof of Theorem~\ref{thmmain} that it is sufficient to check that
\begin{align*}
\textnormal{$ \mathcal{M} $ contains an infinite set} & \Leftrightarrow \textrm{$ \mathfrak{X}_{\mathcal{M}} $ has an unconditional basis} \\
 & \Leftrightarrow \textrm{$ \mathfrak{X}_{\mathcal{M}} \hookrightarrow Z $, where $ Z $ has an unc. basis} \\
 & \Leftrightarrow \textrm{$ \mathfrak{X}_{\mathcal{M}} $ is isomorphic to a $ C(K) $ space.}
\end{align*}
As $ c_{0} $ satisfies all three properties, implications \uv{$ \Rightarrow $} follow simply from Proposition~\ref{propXMisomc0}(1). Considering Proposition~\ref{propXMisomc0}(2), to prove implications \uv{$ \Leftarrow $}, we need to verify that any of those three conditions implies that $ \mathfrak{X}_{\mathcal{M}} $ contains an isomorphic copy of $ c_{0} $. It is sufficient to use Theorem~\ref{thmros} and the fact that every infinite-dimensional separable $ C(K) $ space contains an isomorphic copy of $ c_{0} $.
\end{proof}

\end{document}